\pgfplotsset{compat=newest}
\tikzstyle{abstract}=[rectangle, draw=black, fill=blue!30, drop shadow,text centered, anchor=north, text=black]
\definecolor{colorFeel3}{RGB}{28, 142, 186}
\definecolor{colorFeel2}{RGB}{226, 19, 19}
\definecolor{colorOoi}{RGB}{34, 120, 15}
\definecolor{colorScott}{RGB}{244, 102, 27}
\definecolor{colorLi}{RGB}{108, 2, 119}
\definecolor{colorA}{RGB}{49,140,231}
\definecolor{colorB}{RGB}{238,16,16}
\definecolor{crimson2143940}{RGB}{214,39,40}
\definecolor{darkgray176}{RGB}{176,176,176}
\definecolor{darkorange25512714}{RGB}{255,127,14}
\definecolor{forestgreen4416044}{RGB}{44,160,44}
\definecolor{lightgray204}{RGB}{204,204,204}
\definecolor{steelblue31119180}{RGB}{31,119,180}
\definecolor{mediumpurple148103189}{RGB}{148,103,189}
\newcommand{\logLogSlopeTriangle}[5]
{

    \pgfplotsextra
    {
        \pgfkeysgetvalue{/pgfplots/xmin}{\xmin}
        \pgfkeysgetvalue{/pgfplots/xmax}{\xmax}
        \pgfkeysgetvalue{/pgfplots/ymin}{\ymin}
        \pgfkeysgetvalue{/pgfplots/ymax}{\ymax}

        \pgfmathsetmacro{\xArel}{#1}
        \pgfmathsetmacro{\yArel}{#3}
        \pgfmathsetmacro{\xBrel}{#1-#2}
        \pgfmathsetmacro{\yBrel}{\yArel}
        \pgfmathsetmacro{\xCrel}{\xArel}

        \pgfmathsetmacro{\lnxB}{\xmin*(1-(#1-#2))+\xmax*(#1-#2)} 
        \pgfmathsetmacro{\lnxA}{\xmin*(1-#1)+\xmax*#1} 
        \pgfmathsetmacro{\lnyA}{\ymin*(1-#3)+\ymax*#3} 
        \pgfmathsetmacro{\lnyC}{\lnyA+#4*(\lnxA-\lnxB)}
        \pgfmathsetmacro{\yCrel}{(\lnyC-\ymin)/(\ymax-\ymin)} 

        \coordinate (A) at (rel axis cs:\xArel,\yArel);
        \coordinate (B) at (rel axis cs:\xBrel,\yBrel);
        \coordinate (C) at (rel axis cs:\xCrel,\yCrel);

        \draw[#5]   (A) -- 
                    (B) --
                    (C) -- node[pos=0.5,anchor=west] {\pgfmathprintnumber[precision=2]{#4}}
                    cycle;
    }
}
\newcommand{\vct}[1]{\mathbold{#1}}
\newcommand{\mat}[1]{\mathbold{#1}}
\DeclareMathOperator*{\argmax}{arg\,max}
\newcommand{\E}{\mathbb{E}}
\newcommand{\Em}{\mathcal{E}}
\newcommand{\Dmu}{D^{\mu}}
\newcommand{\N}{\mathcal{N}}
\newcommand{\n}{\vct{n}}
\renewcommand{\P}{\mathbb{P}}
\newcommand{\R}{\mathbb{R}}
\newcommand{\fem}{\mathrm{fem}}
\newcommand{\fpp}{Feel++}
\newcommand{\lb}{\mathrm{lb}}
\newcommand{\rbm}{\mathrm{rbm}}
\newcommand{\test}{\text{test}}
\newcommand{\tol}{\text{tol}}
\newcommand{\train}{\text{train}}
\newcommand{\ub}{\mathrm{ub}}
\newcommand{\var}{\mathrm{var}}
\newcommand{\x}{\vct{x}}
\newcommand{\logN}{\log\text{-}\mathcal{N}}
\newcommand{\unif}{\mathcal{U}}
\newcommand{\norm}[2]{\left\Vert#1\right\Vert_{#2}}
\newcommand{\prm}[1]{\textcolor{blue}{#1}}
\theoremstyle{plain}
\newtheorem{thm}{Theorem}[section]
\theoremstyle{definition}
\newtheorem{rem}[thm]{Remark}
\title{Model order reduction and sensitivity analysis for complex heat transfer simulations inside the human eyeball}
\author{%
    \texorpdfstring{
        Thomas Saigre\textsuperscript{1}\textsuperscript{$\dagger$}, Christophe Prud'homme\textsuperscript{1}, Marcela Szopos\textsuperscript{2}\\
        \small\textsuperscript{1}Institut de Recherche Mathématique Avancée, UMR 7501 Université de Strasbourg et CNRS\\
        \small\textsuperscript{2}Université Paris Cité, CNRS, MAP5, F-75006 Paris, France\\
        \small\textsuperscript{$\dagger$}Corresponding author contact: \texttt{saigre@math.unistra.fr}
    }{Thomas Saigre, Christophe Prud'homme, Marcela Szopos}
}
\date{\gitReln\  \gitAuthorDate\ (\gitAbbrevHash)}
\definecolor{CustomBlue}{rgb}{0.25, 0.41, 0.88} 
\renewcommand{\d}{\,\mathrm{d}}
\begin{document}

\maketitle

\begin{abstract}
    Heat transfer in the human eyeball, a complex organ, is significantly influenced by various pathophysiological and external parameters.
    Particularly, heat transfer critically affects fluid behavior within the eye and ocular drug delivery processes.
    Overcoming the challenges of experimental analysis, this study introduces a comprehensive three-dimensional mathematical and computational model to simulate the heat transfer in a realistic geometry.
    Our work includes an extensive sensitivity analysis to address uncertainties and delineate the impact of different variables on heat distribution in ocular tissues.
    To manage the model's complexity, we employed a very fast model reduction technique with certified sharp error bounds, ensuring computational efficiency without compromising accuracy.
    Our results demonstrate remarkable consistency with experimental observations and align closely with existing numerical findings in the literature.
    Crucially, our findings underscore the significant role of blood flow and environmental conditions, particularly in the eye's internal tissues.
    Clinically, this model offers a promising tool for examining the temperature-related effects of various therapeutic interventions on the eye.
    Such insights are invaluable for optimizing treatment strategies in ophthalmology.
\end{abstract}

\tableofcontents

\vspace{\baselineskip}

\noindent
\textbf{Keywords :} mathematical and computational ophthalmology, heat transfer, validation, finite element method, real-time model order reduction, uncertainty quantification, sensitivity analysis, Sobol index analysis.

\section{Introduction}
\label{sec:intro}

The development of new technologies allows us to simulate more and more complex models in order to apprehend the world we live in.
In this study, we will focus on a specific model: heat transfer inside the human eyeball.
The temperature of the eyeball may have an impact on the distribution of drugs in the eye, partly due to the aging of the tissues \cite{BHANDARI2020286}.
Hyperthermia is one of the most common treatments for eye tumors~\cite{li2010}, and understanding the mechanism of heat transfer could enhance the efficacy of ophthalmic treatments, such as laser therapy of the retina~\cite{Masters2004}.

Heat transfer is also a key factor in the study of the effects of electromagnetic radiation on the eye, as pointed out in \cite{Hirata2007,doi:10.1142/S0219519409002936}.
The model, originally introduced in \cite{Scott_1988} to examine temperature rises induced by exposure to infrared radiation,
has been expanded upon in subsequent studies \cite{NG2006268, NG2007829, OOI2008252, li2010} using diverse methods for computing heat transfer.

While invasive studies on animals have been conducted \cite{Purslow2005-ky}, non-invasive measurements on human subjects are scarce, complex to perform, and may yield inaccurate results \cite{ROSENBLUTH1977325}.
Most studies focus on temperature measurements at the eye's surface \cite{MAPSTONE1968237, Efron1989OcularST} but report significant differences and identify several sources of uncertainty.
Alternatively, numerical simulations can provide complementary information.
However, in order to guarantee the reliability of such results, a rigorous validation step is required.

The present contribution aims to contribute to these developments, by means of a mathematical and computational modeling approach, combined with a sensitivity analysis study performed thanks to a model reduction technique.
The comparison with data available in the literature, obtained either by measurement on healthy subjects~\cite{Efron1989OcularST} or by other simulations \cite{NG2006268, NG2007829, li2010} will ensure the validity of the approach.

In this model, numerous parameters, both biomechanical and geometrical, are involved.
The present study concentrates on biomechanical parameters, in a large range that include potential extremal conditions.
The variation of these parameters can have a significant impact on the results.
To quantify their impact, we set up a framework to perform a forward uncertainty quantification study, complemented by a sensitivity analysis.
Deterministic sensitivity analysis has already been performed in \cite{Scott_1988, NG2006268, NG2007829, li2010}, using various numerical methods.
In this work, we reproduce and extend these results, to incorporate the effect of blood flow, as suggested for instance in \cite{Scott_1988}.
We also run a global sensitivity analysis, that accounts for stochastic effects, and discriminate among different factors by means of Sobol's indices \cite{Sobol1993SensitivityEF}.
The combination of deterministic and stochastic methods is an effective practice in the field of uncertainty quantification, and has been successfully used in recent various applications, such as \cite{DODIG201448} where the impact of uncertainties on the distribution of the electromagnetic field in the ocular tissue is studied;
or more generally in the human head \cite{SUSNJARA20221,9522096}.
To the best of our knowledge, this is the first time that such a study is performed in the context of bioheat transport in the tissues of the human eyeball.

While Sobol's indices are effective in measuring parameter impact and interactions, the complexity and the significant computational time of our model are very challenging.
To overcome this, we adopt the certified reduced basis method \cite{10.1115/1.1448332, Quarteroni2016} to obtain a reduced model, maintaining its 3D nature while significantly reducing computational demands.
This method aligns with the paradigm observed in patient-specific mathematical models applied to biomedical problems,
ensuring a comprehensive approach involving data integration, model derivation, numerical solving, validation, and uncertainty quantification, as seen in mature research fields like cardiovascular simulations or cerebral hemodynamics.
In ophthalmology, a similar paradigm is imperative due to the richness and heterogeneity of available data, requiring innovative approaches for diagnosis and monitoring.

More generally, the present work aims to contribute to the project Eye2Brain \cite{eye2brain},
that has the ambitious objective to connect the cerebral and ocular environments and contribute in the long term to a better understanding of neurodegenerative diseases~\cite{Guidoboni2020-vr}.
In this context, a model accounting for the combined effects of ocular blood flow and different ocular tissues was proposed in \cite{https://doi.org/10.1002/cnm.3791}0
To incorporate inherent uncertainties and variability, an uncertainty propagation and sensitivity analysis on the component simulating the fluid flows in the eye was developed in \cite{MBE2021}.
We here focus on the heat propagation phenomena, with the perspective of coupling the fluid and thermal contributions in future work.

The structure of the paper is the following.
After the introduction, we describe in \Cref{sec:model-phys} the geometrical model describing the human eyeball,
the biophysical model governing the heat transfer, as well as the parameters involved in the equations.
Next, we present in \Cref{sec:model-num} the methods developed to simulate the full and reduced models, including a step of verification and validation,
to ensure that the mathematical and computational framework is correct.
We report in \Cref{sec:uq} our results of the sensitivity analysis, using two methods: a deterministic one and a stochastic approach.
All the methods are implemented in the open-source software Feel++ \cite{christophe_prud_homme_2023_8272196} and can be reproduced following guidelines described in \Cref{app:reproduce}.
Finally, conclusions and perspectives are outlined in \Cref{sec:conclusion}.

\section{Three-dimensional biophysical model}
\label{sec:model-phys}

\subsection{Geometry of the human eyeball}
\label{sec:model-geo}

In this section, we describe the realistic three-dimensional geometry that will be used in the sequel.
The model we employ in the present work stems from \cite{https://doi.org/10.1002/cnm.3791},
and was constructed using a CAD (Computer Aided Design) module from SALOME \cite{salome}.
\Cref{fig:geo-eye} shows a cut-away view along a vertical plane of the reconstructed eye anatomy.

\begin{figure}
    \centering

    \def\svgwidth{0.5\columnwidth}
    \import{./fig/eye/vectorized-figures}{}
    \caption{Geometrical model of the human eye.}
    \label{fig:geo-eye}
\end{figure}

The eye is composed of several regions, which have different physical properties.
The original geometry contained five subdomains: the sclera, the choroid, the retina, the cornea and the lamina cribrosa.
To have a better assessment of the thermal properties of each part, we further decompose the geometry as follows:
(i) the cornea which allows heat transfer between the eye and the ambient air,
(ii) the envelope of the eye composed of the sclera, the optic nerve, and the lamina cribrosa,
(iii) the vascular beds namely the choroid and the retina, mostly composed of blood vessels,
(iv) the anterior and posterior chambers, filled with aqueous humor,
(v) the lens and
(vi) the vitreous body filled with the vitreous humor, a transparent liquid allowing the light to reach the retina.
In the present model, the optic nerve domain is assumed to be homogeneous, the contribution of the inner vessels is not directly taken into account in heat transfer.

Several more simplified geometrical descriptions were already utilized in the literature to study heat transport in the eye;
mostly in 2D \cite{Scott_1988,NG2006268} or in 3D \cite{NG2007829,li2010}.
In particular, the 3D model developed in \cite{NG2007829} did not incorporate a detailed description of the vascular beds, although previous studies \cite{Scott_1988} and our further sensitivity analysis pointed out the importance of the influence of the blood temperature on the heat distribution.

In order to compare in a first stage our results with previously reported findings \cite{Efron1989OcularST},
we define on the front part of the cornea the \emph{geometrical center of the cornea} (GCC), see \Cref{fig:geo-eye}, which is an imaginary line ``cutting'' the cornea horizontally.
This region is interesting because this part of the eye is easily accessible and the temperature can be measured non-invasively.

We focus on \emph{outputs of interest} that are studied in the literature \cite{Scott_1988, NG2006268}.
These outputs are the temperature values at given locations or the mean temperature on a given domain.
Precisely, we select on points present at the interface of two regions of the eye, as well as the mean temperature over the cornea.
For a precise description of these locations, see \Cref{fig:outputs}.

\begin{figure}
    \centering
    \def\svgwidth{0.5\columnwidth}
    \import{./fig/eye/vectorized-figures}{}
    \caption{Featured geometrical locations for the output of interest (pointwise temperature).}
    \label{fig:outputs}
\end{figure}

\subsection{Biomechanical non-linear continuous model and its linearization}

Based on \Cref{sec:model-geo}, the geometry of the eye can be written as a disjoint union of different regions:
$\Omega = \bigsqcup_{i=1}^{10} \Omega_i$, where $i$ is the index of the subdomain and $\Omega_i$ corresponds to the following regions: cornea, vitreous humor, aqueous humor, retina, iris, choroid, lens, sclera, lamina cribrosa, and optic nerve.


\begin{subequations}
We focus on stationary heat transfer in this domain.
Following \cite{Scott_1988, NG2006268} the steady-state condition of the heat transfer in the human eye can be described by the following system
\begin{equation}
    \nabla\cdot\left(k_i\,\nabla T\right) = 0\qquad\text{ in }\Omega = \sqcup_{i=1}^{10}\Omega_i
    \label{eq:model:heat}
\end{equation}

where:
\begin{itemize}
    \item $i$ is the volume index (cornea, vitreousHumor...),
    \item $T_i$ [\unit{\kelvin}] is the temperature in the domain $\Omega_i$,
    \item $k_i$ [\unit{\watt.\meter^{-1}.\kelvin^{-1}}] is the thermal conductivity of $\Omega_i$.
\end{itemize}

We set the global thermal conductivity $k$ [\unit{\watt.\meter^{-1}.\kelvin^{-1}}] as a discontinuous piece-wise constant function: $k = k_i$ on $\Omega_i$.
The boundary $\partial\Omega$ is decomposed as: $\partial\Omega = \Gamma_\text{amb}\cup\Gamma_\text{body}$ (see \Cref{fig:boundary-conditions}),
where $\Gamma_\text{amb}$ corresponds to the boundary region exposed to the ambient environment and $\Gamma_\text{body}$ the boundary of the internal domain.
Denote by $\n$ the outward normal vector to the domain $\Omega$.
The following boundary conditions are adopted:

\begin{itemize}
    \item To model the exchange between the eye and the ambient air, and incorporate radiative heat transfer we impose the following non-linear Neumann condition:
        \begin{equation}
        -k\dfrac{\partial T}{\partial \n} = h_\text{amb}(T-T_\text{amb}) + \sigma\varepsilon(T^4-T_\text{amb}^4) + E
        \quad\text{on }\Gamma_\text{amb}
        \label{eq:model:neumann}
        \end{equation}
        Three terms are present in this condition to describe different heat loss mechanisms occurring on the cornea:
        (i) The first term in the equation represents the convective heat transfer between the surface of the eye and the surrounding air.
        The parameter $h_\text{amb}$ [\unit{\watt.\meter^{-2}.\kelvin^{-1}}] is the air convective coefficient, and $T_\text{amb}$ [\unit{\kelvin}] is the ambient temperature;
        (ii) the second term represents the radiative heat transfer between the surface of the eye and the surrounding environment,
        where the parameter $\sigma$ is the Stefan-Boltzmann constant ($\sigma = \qty{5.67e-8}{\watt.\meter^{-2}.\kelvin^{-1}}$), and $\varepsilon$ [--] is the emissivity of the surface;
        (iii) the third term represents the heat loss due to tear evaporation.
        The parameter $E$ [\unit{\watt.\meter^{-2}}] represents the heat transfer rate due to evaporation, which depends on the environmental conditions and the tear film characteristics.
        This process causes a cooling effect on the surface of the eye, which can be significant in dry environments or cases of reduced tear production.

    \item  To model the thermal exchanges between the eye and the body, we impose:
        \begin{equation}
        -k\frac{\partial T}{\partial\n} = h_\text{bl}(T-T_\text{bl})
        \quad\text{on }\Gamma_\text{body}
        \label{eq:model:robin}
        \end{equation}
        where the parameter $h_\text{bl}$ [\unit{\watt.\meter^{-2}.\kelvin^{-1}}] is the blood convection coefficient and $T_\text{bl}$ [\unit{\kelvin}] is the blood temperature.
\end{itemize}


Finally, to ensure a continuous flow of heat flux and no temperature jump, we impose at the interface between two adjacent regions $\Omega_i$ and $\Omega_j$ the following condition:

\begin{equation}
\begin{cases}
    \begin{tabular}{rcl}
        $T_i$ & $=$ & $T_j$\\
        $k_i(\nabla T_i\cdot\n_i)$ & $=$ & $-k_j(\nabla T_j\cdot\n_j)$
    \end{tabular}
    \text{on } \partial\Omega_i\cap\partial\Omega_j
\end{cases}
\label{eq:model:interface}
\end{equation}
\label{eq:model}
where $\n_i$ (resp. $\n_j$) denotes the outward normal vector to the domain $\Omega_i$ (resp. $\Omega_j$).

\end{subequations}

\begin{figure}
    \centering
    \def\svgwidth{0.3\columnwidth}
\begingroup%
  \makeatletter%
  \providecommand\color[2][]{%
    \errmessage{(Inkscape) Color is used for the text in Inkscape, but the package 'color.sty' is not loaded}%
    \renewcommand\color[2][]{}%
  }%
  \providecommand\transparent[1]{%
    \errmessage{(Inkscape) Transparency is used (non-zero) for the text in Inkscape, but the package 'transparent.sty' is not loaded}%
    \renewcommand\transparent[1]{}%
  }%
  \providecommand\rotatebox[2]{#2}%
  \newcommand*\fsize{\dimexpr\f@size pt\relax}%
  \newcommand*\lineheight[1]{\fontsize{\fsize}{#1\fsize}\selectfont}%
  \ifx\svgwidth\undefined%
    \setlength{\unitlength}{954.79925537bp}%
    \ifx\svgscale\undefined%
      \relax%
    \else%
      \setlength{\unitlength}{\unitlength * \real{\svgscale}}%
    \fi%
  \else%
    \setlength{\unitlength}{\svgwidth}%
  \fi%
  \global\let\svgwidth\undefined%
  \global\let\svgscale\undefined%
  \makeatother%
  \begin{picture}(1,0.72800968)%
    \lineheight{1}%
    \setlength\tabcolsep{0pt}%
    \put(0,0){\includegraphics[width=\unitlength,page=1]{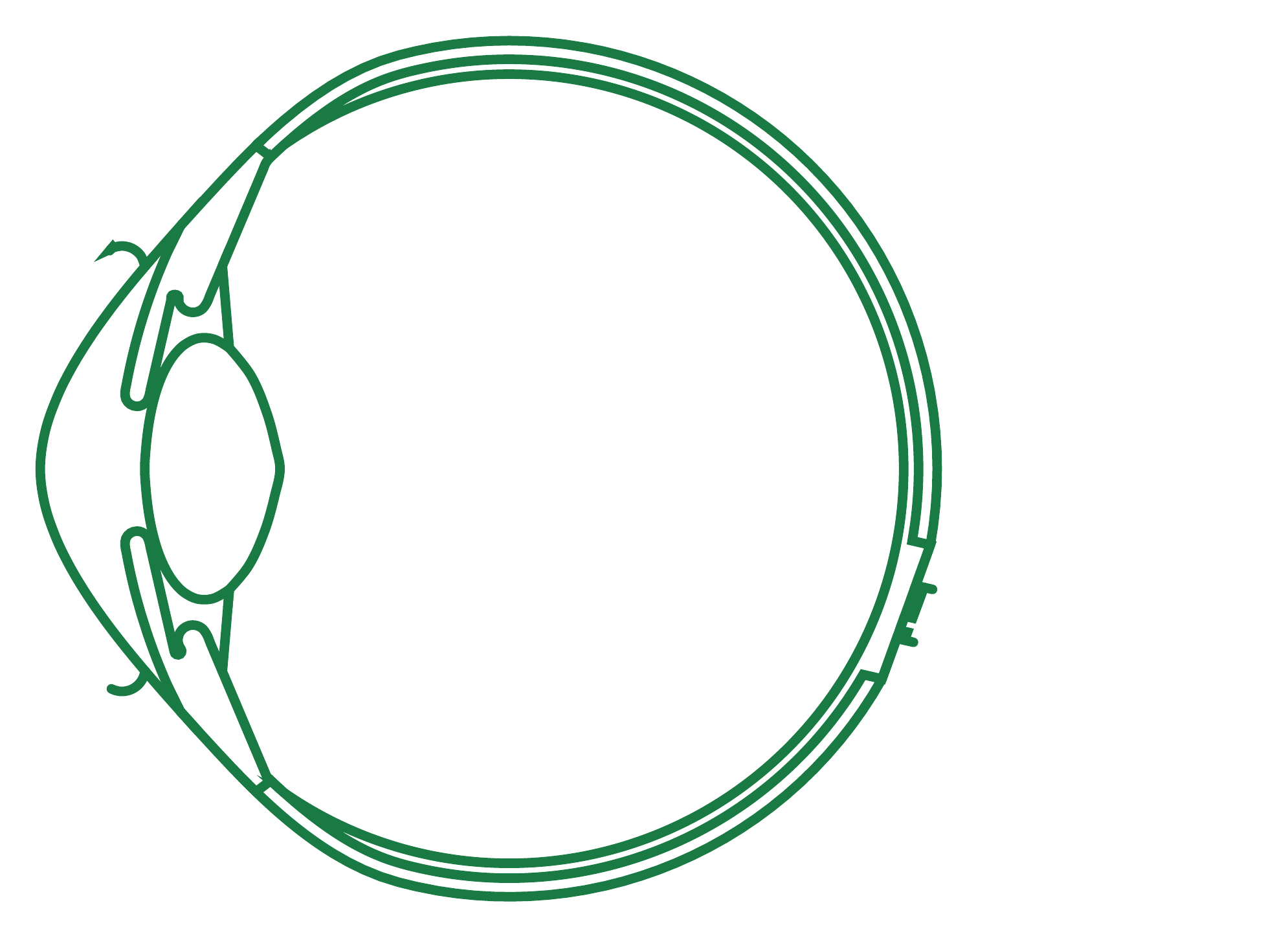}}%
    \put(0,0){\includegraphics[width=\unitlength,page=2]{boundaries_color_free.pdf}}%
    \put(-0.2,0.45){\color[rgb]{0.1372549,0,0.83137255}\makebox(0,0)[lt]{\lineheight{1.25}\smash{\begin{tabular}[t]{l}$\Gamma_\text{amb}$\end{tabular}}}}%
    \put(0,0){\includegraphics[width=\unitlength,page=3]{boundaries_color_free.pdf}}%
    \put(0.8,0.45){\color[rgb]{0.78431373,0,0.2627451}\makebox(0,0)[lt]{\lineheight{1.25}\smash{\begin{tabular}[t]{l}$\Gamma_\text{body}$\end{tabular}}}}%
  \end{picture}%
\endgroup%

    \caption{Description of the physical boundaries and interfaces of the domain $\Omega$.}
    \label{fig:boundary-conditions}
\end{figure}

System (\ref{eq:model:heat}) - (\ref{eq:model:interface}) defines a non-linear problem, denoted $\Em_\text{NL}$ in the sequel.

\begin{rem}
    \label{rem:linearization}
    Note that the condition (\ref{eq:model:neumann}) modeling radiative transfer is non-linear, because of the term in $T^4$,
    which requires a more complex treatment, both from the mathematical standpoint, for the reduced basis method; and from the numerical standpoint, due to extra computational cost.
    As an alternative, a linearization of the condition (\ref{eq:model:neumann}) was proposed in \cite{Scott_1988}:

    \begin{equation*}
        \sigma\varepsilon (T^4-T_\text{amb}^4) = (T-T_\text{amb})\underbrace{\sigma\varepsilon(T^2+T^2_\text{amb})(T+T_\text{amb})}_{=: h_\text{r}},
    \end{equation*}
    which leads to a linear Robin condition.
    The value $h_\text{r}$ stands for the \emph{radiation heat transfer coefficient} and is approximately equal to \qty{6}{\watt.\meter^{-2}.\kelvin^{-1}} \cite{Scott_1988}.

    Condition (\ref{eq:model:neumann}) can hence be rewritten as:

    \begin{equation}
        -k\dfrac{\partial T_i}{\partial \n} = h_\text{amb}(T-T_\text{amb}) + h_\text{r}(T-T_\text{amb}) + E \qquad\text{ on }\Gamma_\text{amb}
        \label{eq:neumann-lin}
    \end{equation}

    The model described by Equations (\ref{eq:model:heat})-(\ref{eq:neumann-lin})-(\ref{eq:model:robin})-(\ref{eq:model:interface}) is further denoted $\Em_\text{L}$.
\end{rem}

\subsection{Model parameters}
\label{sec:parameters}

In the model presented in the previous section, many parameters are involved, but not all of them are directly measurable.
Moreover, inherent uncertainties due to noise and individual variability must be taken into account in the modeling process.
We therefore fixed in a first stage a set of baseline values, corresponding to the nominal values for the human body, according to the literature \cite{Scott_1988, NG2006268} (see \Cref{tab:parameters}).
In a second step, we split the total set of parameters into two subsets:
a first part kept fixed to baseline values, and a second part that varies in a certain range (see \Cref{tab:parameters}).
The aim is to perform a refined sensitivity analysis, that encompasses previously published studies \cite{Scott_1988, NG2006268, NG2007829}, and extends the analysis to a larger parameter space.

\begin{table}
    \centering
    \resizebox{\textwidth}{!}{
    \begin{tabular}{*{5}{c}}
        \toprule
        \textsf{\textbf{Symbol}} & \textsf{\textbf{Name}} & \textsf{\textbf{Dimension}} & \textsf{\textbf{Baseline value}} & \textsf{\textbf{Range}}\\
        \midrule
        $T_\text{amb}$                                  & Ambient temperature                   & [\unit{\kelvin}]                        & 298    & [283.15, 303.15] \\
        $T_\text{bl}$                                   & Blood temperature                     & [\unit{\kelvin}]                        & 310    & [308.3, 312] \\
        $h_\text{amb}$                                  & Ambient air convection coefficient    & [\unit{\watt.\meter^{-2}.\kelvin^{-1}}] & 10     & [8, 100] \\
        $h_\text{bl}$                                   & Blood convection coefficient          & [\unit{\watt.\meter^{-2}.\kelvin^{-1}}] & 65     & [50, 110] \\
        $E$                                             & Evaporation rate                      & [\unit{\watt.\meter^{-2}}]              & 40     & [20, 320] \\
        $k_\text{lens}$                                 & Lens conductivity                     & [\unit{\watt.\meter^{-1}.\kelvin^{-1}}] & 0.4    & [0.21, 0.544] \\
        $k_\text{cornea}$                               & Cornea conductivity                   & [\unit{\watt.\meter^{-1}.\kelvin^{-1}}] & 0.58   & -- \\
        \begin{tabular}{c}$k_\text{sclera}=k_\text{iris}
            =$\\$k_\text{lamina}=k_\text{opticNerve}$\end{tabular}       & Eye envelope components conductivity  & [\unit{\watt.\meter^{-1}.\kelvin^{-1}}] & 1.0042 & -- \\
        $k_\text{aqueousHumor}$                         & Aqueous humor conductivity            & [\unit{\watt.\meter^{-1}.\kelvin^{-1}}] & 0.28   & -- \\
        $k_\text{vitreousHumor}$                        & Vitreous humor conductivity           & [\unit{\watt.\meter^{-1}.\kelvin^{-1}}] & 0.603  & -- \\
        $k_\text{choroid}=k_\text{retina}$              & Vascular beds conductivity            & [\unit{\watt.\meter^{-1}.\kelvin^{-1}}] & 0.52   & -- \\
        $\varepsilon$                                   & Emissivity of the cornea              & [--]                                    & 0.975  & -- \\
        \bottomrule
    \end{tabular}
    }
    \caption{Parameters involved in the model, baseline values and ranges used in the sensitivity analysis.}
    \label{tab:parameters}
\end{table}

Specifically, we set the varying \emph{parameter space} $\Dmu\subset\R^6$ as the Cartesian product of the intervals defined in the last column of \Cref{tab:parameters}.
For the purpose of the sensitivity analysis, an element $\mu=\{T_\text{amb}, T_\text{bl}, h_\text{amb}, h_\text{bl}, E, k_\text{lens}\}\in\Dmu$ is called a \emph{parameter},
and we denote $\bar{\mu}$ the baseline parameter, extracted from the corresponding column in \Cref{tab:parameters}.
The dependence of the model concerning the parameter $\mu$ is emphasized by the notation $\Em_\text{L}(\mu)$ and $\Em_\text{NL}(\mu)$.

\section{Mathematical and computational framework}
\label{sec:model-num}

This section outlines the mathematical and computational framework, including the variational formulation derivation, the high fidelity finite element method (FEM) resolution technique, and the construction of reduced basis metamodel.
It is followed by the presentation of numerical results, verification and validation steps.

\subsection{Continuous and discrete model}
\label{sec:variational-formaultion}

We first compute the variational formulation of the linearized model $\Em_\text{L}(\mu)$ described in \Cref{sec:model-phys}.

Let $v\in H^1(\Omega)$ be a test function.
As the union $\Omega=\bigsqcup_i \Omega_i$ is disjoint, we have:

\begin{equation}
    \int_\Omega -\nabla\cdot(k\nabla T)v\d x = \sum_i\int_{\Omega_i}-\nabla\cdot(k_i\nabla T)v\d x
\end{equation}

Hence, using Green's theorem:
\begin{subequations}
\begin{equation}
    \sum_i\int_{\Omega_i}-\nabla\cdot(k_i\nabla T)v\d x = 0 \Leftrightarrow \sum_i\int_{\Omega_i}k_i\nabla T\cdot \nabla v\d x - \int_{\partial\Omega_i} k_i\dfrac{\partial T}{\partial\n_i}v\d \sigma = 0
\end{equation}
with boundary and interface conditions \Cref{eq:model:robin,,eq:neumann-lin,,eq:model:interface}, we obtain

\begin{align}
    \nonumber &\sum_i k_i\int_{\Omega_i}\nabla T\cdot\nabla v \d x+ \int_{\Gamma_\text{amb}}\left[h_\text{amb}T + h_\text{r}T\right]v\d \sigma + \int_{\Gamma_\text{body}}h_\text{bl}Tv\d \sigma =\\
        &\qquad\qquad\int_{\Gamma_\text{amb}}\left[h_\text{amb}T_\text{amb} + h_\text{r}T_\text{amb} - E\right]v\d \sigma + \int_{\Gamma_\text{body}}h_\text{bl}T_\text{bl}v\d \sigma
\end{align}
\label{eq:variational-formulation-comp}
\end{subequations}

The previous equation is equivalent to:

\begin{subequations}

\begin{equation}
    a_L(T,v; \mu) = f_L(v; \mu)
\end{equation}
with:
\begin{align}
    \nonumber
    a_L(T, v; \mu) &:= \prm{k_\text{lens}}\int_{\Omega_\text{lens}}\nabla T\cdot \nabla v\d x + \sum_{i\neq\text{lens}}k_i\int_{\Omega_i}\nabla T\cdot\nabla v\d x \\
        &\hspace{4cm}+ \int_{\Gamma_\text{amb}}\left[\prm{h_\text{amb}}T + h_\text{r}T\right]v\d \sigma + \int_{\Gamma_\text{body}}\prm{h_\text{bl}}Tv\d \sigma\\
    f_L(v; \mu) &:= \int_{\Gamma_\text{amb}}\left[\prm{h_\text{amb}}\prm{T_\text{amb}} + h_\text{r}\prm{T_\text{amb}} - \prm{E}\right]v\d \sigma + \int_{\Gamma_\text{body}}\prm{h_\text{bl}}\prm{T_\text{bl}}v\d \sigma
\end{align}
\label{eq:variational-form}
\end{subequations}

The problem statement is therefore: for $\mu\in\Dmu$ given, find the output of interest $s(\mu)\in\R$ given by

\begin{equation}
    s(\mu) = \ell(T(\mu)),
\end{equation}
where $T(\mu)\in H^1(\Omega)$ is solution to
\begin{equation}
    \label{eq:variational-formulation}
    a_L(T(\mu), v; \mu) = f_L(v; \mu)\quad \forall v\in H^1(\Omega).
\end{equation}

The functional $\ell$ returns the desired output of interest, which can be the mean temperature in a selected region
\emph{e.g.\ }$\ell(T(\mu)) = \frac1{|\Omega_\text{cornea}|}\int_{\Omega_\text{cornea}}T(\mu)\d x$,
or the temperature at a fixed point \emph{e.g.\ }$\ell(T(\mu)) = \left<\delta_O, T(\mu)\right>$.

\begin{thm}
    Let $\mu\in\Dmu$ fixed.
    The problem (\ref{eq:variational-form}) is well-posed for $v\in H^1(\Omega)$:
    there exists a unique $T(\mu)\in H^1(\Omega)$ such that $a_L(T(\mu), v; \mu) = f_L(v; \mu)$ for all $v\in H^1(\Omega)$.
    If $T(\mu)\in \mathcal{C}^1(\bar{\Omega})\cap \mathcal{C}^2(\Omega)$, then $T(\mu)$ is solution to problem $\Em_\text{L}(\mu)$.
\end{thm}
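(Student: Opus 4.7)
The natural strategy is to apply the Lax--Milgram theorem on the Hilbert space $H^1(\Omega)$. Bilinearity of $a_L(\cdot,\cdot;\mu)$ and linearity of $f_L(\cdot;\mu)$ are evident from the definition. For continuity, Cauchy--Schwarz bounds each volume contribution by $k_i\|\nabla T\|_{L^2(\Omega_i)}\|\nabla v\|_{L^2(\Omega_i)}$, and the trace theorem $\|w\|_{L^2(\partial\Omega)}\le C_{\mathrm{tr}}\|w\|_{H^1(\Omega)}$ together with the fact that $h_{\mathrm{amb}}$, $h_{\mathrm r}$ and $h_{\mathrm{bl}}$ are bounded positive constants yields continuity of the boundary surface integrals. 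The same bound handles $f_L$, which is continuous on $H^1(\Omega)$ because $T_{\mathrm{amb}}$, $T_{\mathrm{bl}}$ and $E$ are constants and $\Gamma_{\mathrm{amb}}$, $\Gamma_{\mathrm{body}}$ have finite surface measure.

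The real obstacle is coercivity. The volume part of $a_L(T,T;\mu)$ only controls $\|\nabla T\|_{L^2(\Omega)}^2$, so it cannot by itself bound $\|T\|_{H^1(\Omega)}^2$. The argument I would use is that the boundary terms provide missing control on $T$ itself: setting $\alpha := \min\bigl(\min_i k_i,\ h_{\mathrm{amb}}+h_{\mathrm r},\ h_{\mathrm{bl}}\bigr)>0$ and noting that $\Gamma_{\mathrm{amb}}\cup\Gamma_{\mathrm{body}}=\partial\Omega$, one obtains
\begin{equation*}
a_L(T,T;\mu) \;\ge\; \alpha\Bigl(\|\nabla T\|_{L^2(\Omega)}^2 + \|T\|_{L^2(\partial\Omega)}^2\Bigr).
\end{equation*}
I would then invoke the generalized Poincaré--Friedrichs inequality on the bounded Lipschitz domain $\Omega$, which provides a constant $C_P>0$ with $\|T\|_{H^1(\Omega)}^2 \le C_P\bigl(\|\nabla T\|_{L^2(\Omega)}^2 + \|T\|_{L^2(\partial\Omega)}^2\bigr)$. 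Combining these two inequalities gives coercivity with constant $\alpha/C_P$. Lax--Milgram then delivers a unique $T(\mu)\in H^1(\Omega)$ solving \eqref{eq:variational-formulation}.

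For the converse implication, assuming the additional regularity $T(\mu)\in\mathcal{C}^1(\bar\Omega)\cap\mathcal{C}^2(\Omega)$, I would undo the derivation of the variational formulation in stages. First, testing against $v\in\mathcal{C}^\infty_c(\Omega_i)$ and applying Green's theorem in reverse on each subdomain yields $\nabla\cdot(k_i\nabla T)=0$ in $\Omega_i$, so \eqref{eq:model:heat} holds. Next, using test functions whose support meets only $\Gamma_{\mathrm{amb}}$ (respectively $\Gamma_{\mathrm{body}}$) and invoking the fundamental lemma of the calculus of variations on the boundary identifies the remaining surface integrals, giving \eqref{eq:neumann-lin} (respectively \eqref{eq:model:robin}). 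Finally, test functions supported across an interface $\partial\Omega_i\cap\partial\Omega_j$ isolate the transmission identity in \eqref{eq:model:interface}; continuity of $T$ across the interface is inherited from the assumption $T\in\mathcal{C}^1(\bar\Omega)$. This last step is routine once coercivity is in hand, so the main technical difficulty of the proof really is the coercivity argument above.
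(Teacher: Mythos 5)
Your proposal is correct and follows essentially the same route as the paper, which proves the result by a direct application of the Lax--Milgram theorem (citing \cite{Ern2021-mi}) together with the regularity of $T$ for the recovery of the strong form. You merely spell out the verification of the hypotheses that the paper leaves to the reference --- in particular the coercivity via the boundary terms on $\Gamma_\text{amb}\cup\Gamma_\text{body}=\partial\Omega$ and a generalized Poincaré--Friedrichs inequality --- and this verification is sound.
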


\begin{proof}
    The result is a straightforward application of the Lax-Milgram theorem \cite{Ern2021-mi}, and of the regularity of $T$.
\end{proof}

\begin{rem}
    The well-posedness of the fully non-linear problem $\Em_\text{NL}(\mu)$ can also be obtained by the mean of a variational approach, in the spirit of \cite{Milka1993}.
\end{rem}

\paragraph*{High fidelity FEM resolution}

We present here the discretization approach and briefly describe the in-house computational framework we developed.

In the sequel, we set $V:= H^1(\Omega)$ and focus on \emph{outputs of interest}, $s_k(\mu)$, for $k\in\llbracket 1, n_\text{output}\rrbracket$ given by the formula $s_k(\mu) = \ell_k(u(\mu); \mu)$,
where $\ell$ is a bounded linear form and $u(\mu)$ is the solution of \Cref{eq:variational-formulation}.

Denote by $V_h\subset V$ the approximate functional space of dimension $\N$, $h$ standing for the discretization of the space, for a finite element approach.
The previous problem is equivalent to:
\begin{subequations}
\begin{align}
    \mat{A}_L(\mu) \vct{T}^\fem(\mu) &= \vct{f}_L(\mu)\\
    s_k(\mu) &= \vct{L}_k(\mu)^T \vct{T}^\fem(\mu)
\end{align}
\label{eq:fe-syst}
\end{subequations}
with $\mat{A}(\mu)\in\R^{\N\times\N}$, $\vct{f}(\mu)\in\R^{\N}$, $\vct{L}_k(\mu)\in\R^{\N}$, and $k$ is the index of the output.
The vector $\vct{T}^\fem(\mu)\in V_h \simeq\R^{\N}$ is the solution, and $s(\mu)\in\R$ is the computed output.

More precisely, the steps run during resolution are given in \Cref{algo:hf}.

\begin{algorithm}
    \KwIn{$\mu\in\Dmu$}
    Construct $\mat{A}(\mu)$, $\vct{f}(\mu)$, $\vct{L}_k(\mu)$\;
    Solve $\mat{A}(\mu)\vct{T}^\fem(\mu) = \vct{f}(\mu)$\;
    Compute outputs $s_k(\mu) = \vct{L}_k(\mu)^T\vct{T}^\fem(\mu)$\;
    \KwOut{Numerical solution $\vct{T}^\fem(\mu)$ and outputs $s_k(\mu)$}
    \caption{High fidelity resolution.}
    \label{algo:hf}
\end{algorithm}

To establish numerical results, we implement \Cref{algo:hf} in the framework of the open-source library \fpp{} \cite{christophe_prud_homme_2023_8272196}\footnote{Soure code: \url{https://github.com/feelpp/feelpp/}},
and specifically the \textsf{heat} toolbox\footnote{See documentation: \url{https://docs.feelpp.org/toolboxes/latest/heat/toolbox.html}}
where both models $\Em_\text{NL}(\mu)$ and $\Em_\text{L}(\mu)$ can be simulated, with both $\P_1$ and $\P_2$ piece-wise polynomials \cite{Ern2021-mi}.
The solution strategy uses conjugate gradient method solver preconditionned by an algrebraic multigrid method; in the context of $\Em_\text{NL}$, non-linear iterations are required.

All the results presented in this document are available and can be reproduced, refer to \Cref{app:reproduce} for more details.
All subsequent computational simulations are performed on the same machine equipped with the following hardware: \texttt{AMD EPYC 7552 48-Core Processor}.


\subsection{Reduced order modeling with the reduced basis method}
\label{sec:rbm}

We now introduce the reduced basis metamodel \cite{10.1115/1.1448332, Rozza2008-hx, Quarteroni2016}.
The goal of the reduced basis method (RBM) is to approximate the solution of the parametrized-PDE described by Equations (\ref{eq:model:heat})-(\ref{eq:neumann-lin})-(\ref{eq:model:robin})-(\ref{eq:model:interface}).
For complex geometries and biomechanical problems, such as the one described in \Cref{sec:model-geo}, numerical solving has a prohibitive cost, especially for studies of uncertainty quantification, requiring the resolution of the system for many parameters.
We briefly present the implemented strategy, following \cite{10.1115/1.1448332}.

Recall that $\vct{T}^\fem(\mu)\in V_h$ can be written as $\vct{T}^\fem(\mu)=\displaystyle\sum_{n=1}^\N T^\fem_n(\mu)\phi_{h,n}$, where $(\phi_{h,n})_{n\in\llbracket 1,\N\rrbracket}$ is a basis of $V_h$.

The main idea of RBM is to construct a low-dimension subspace $V_N\subset V_h$, of dimension $N$ with $N\ll \N$, such that the approximation error is small:
$\Vert \vct{T}^\fem(\mu) - \vct{T}^\rbm(\mu)\Vert \leqslant \varepsilon_\tol$, while the procedure to compute $\vct{T}^\rbm(\mu)$ is efficient and stable.

The reduced equivalent of the variational form \Cref{eq:variational-formulation} is:
given $\mu\in\Dmu$, find $\vct{T}^{\rbm, N}(\mu)\in V_N$ such that:
\begin{equation}
    a_L(\vct{T}^{\rbm, N}(\mu), \vct{v}; \mu) = f_L(\vct{v}; \mu) \quad \forall \vct{v}\in V_N
    \label{eq:pb-var-rbm-reduced}
\end{equation}

The reduced space $V_N$ is constructed from \emph{snapshots}, which are high fidelity solutions.
The RBM consists of two main phases:
(i) the \emph{offline stage}, where the reduced space is constructed, and
(ii) the \emph{online stage}, where the reduced space is used to compute the solution of the system.
The first step is performed only once and can be costly, while the second step is performed for each parameter $\mu$ and is efficient.

During the offline stage, snapshots are computed for a set of parameters $\{\mu_i\}_{i=1}^N$.
This gives a family of vectors $\big(\vct{T}^\fem(\mu_i)\big)_{1\leqslant i\leqslant N}\subset V_h$.
The reduced space is defined by $V_N := \text{span}\left(\vct{\xi}_i\right)$, where $(\vct{\xi}_i)_{1\leqslant i\leqslant N}$ is an orthonormal family of vectors, obtained by the Gram-Schmidt process applied to the snapshots $\{\vct{T}^\fem(\mu_i)\}_{1\leqslant i\leqslant N}$.
We define the snapshots matrix $\mat{Z}_N = \left[\vct{\xi}_1,\cdots,\vct{\xi}_N\right]\in \R^{\N,N}$.

The snapshots can be selected in different ways.
The first approach is to select the snapshots randomly in the parameter space, but this could lead to a poor approximation of the solution \cite{buffa2012}.
Another approach is to select the snapshots greedily, by selecting the parameter that maximizes the error between the reduced solution and the high fidelity solution, see \Cref{sec:generation-reduced-basis}.

Setting $\mat{A}_N(\mu) = \mat{Z}_N^T \mat{A}_L(\mu) \mat{Z}_N \in \R^{N\times N}$ and $\vct{f}_N(\mu) = \mat{Z}_N^T \vct{f}_L(\mu) \in \R^N$, we obtain the reduced algebraic system of size $N$:
\begin{subequations}
\begin{align}
    \mat{A}_N(\mu) \vct{T}^{\rbm,N}(\mu) &= \vct{f}_N(\mu)\label{eq:fe-syst-reduced:pb}\\
    s_{k,N}(\mu) &= \vct{L}_{k,N}(\mu)^T \vct{T}^{\rbm,N}(\mu)
\end{align}
\label{eq:fe-syst-reduced}
\end{subequations}
the same process applying for the outputs $\vct{L}_k$.

Thanks to the linearity of the model $\Em_\text{L}(\mu)$, we can further write the following \emph{affine decomposition}:
for $T, v \in V$,

\begin{subequations}
\begin{equation}
    a_L(T, v; \mu) = \sum_{q=1}^{Q_a} \beta_A^q(\mu) a^q_L(T, v)
\end{equation}
with
\begin{align}
    \beta_A^1(\mu) &= \prm{k_\text{lens}} & a^1_L(T, v) &= \int_{\Omega_\text{lens}}\nabla T\cdot\nabla v\d x\\
    \beta_A^2(\mu) &= \prm{h_\text{amb}}  & a^2_L(T, v) &= \int_{\Gamma_\text{amb}} Tv\d\sigma\\
    \beta_A^3(\mu) &= \prm{h_\text{bl}}   & a^3_L(T, v) &= \int_{\Gamma_\text{body}} Tv\d\sigma\\
    \beta_A^4(\mu) &= 1                   & a^4_L(T, v) &= \int_{\Gamma_\text{amb}}h_\text{r} Tv\d\sigma + \sum_{i\neq\text{lens}}k_i\int_{\Omega_i}\nabla T\cdot\nabla v\d x
\end{align}
\label{eq:decomposition-a}
\end{subequations}
and
\begin{subequations}
\begin{align}
    f_L(v; \mu) = \sum_{p=1}^{Q_f} \beta_F^p(\mu) f^p_L(v)
\end{align}
with
\begin{align}
    \beta_F^1(\mu) &= \prm{h_\text{amb}T_\text{amb}} + h_\text{r}\prm{T_\text{amb}} - \prm{E} & f^1(v) &= \int_{\Gamma_\text{amb}} v\d\sigma\\
    \beta_F^2(\mu) &= \prm{h_\text{bl}T_\text{bl}} & f^2(v) &= \int_{\Gamma_\text{body}}v\d\sigma
\end{align}
\label{eq:decomposition-f}
\end{subequations}
where $Q_a = 4$ and $Q_f = 2$.
We furthermore define the algebraic matrices $\mat{A}_L^q\in\R^{\N\times\N}$ and vectors $\vct{f}_L^p\in\R^{\N}$,
so the following equality holds:
\begin{equation}
    \mat{A}_L(\mu) = \sum_{q=1}^{Q_a} \beta_A^q(\mu) \mat{A}_L^q, \quad
    \vct{f}_L(\mu) = \sum_{p=1}^{Q_f} \beta_F^p(\mu) \vct{f}_L^p
\end{equation}

From this decomposition and \Cref{eq:fe-syst-reduced:pb}, we obtain the following algebraic system:
\begin{equation}
    \mat{A}_N(\mu) = \sum_{q=1}^{Q_a} \beta_A^q(\mu)\underbrace{\mat{Z}_N^T \mat{A}_L^q\mat{Z}_N}_{\mat{A}^q_N}
    \label{eq:fe-syst-reduced:matrices}
\end{equation}

We set $\mat{A}_N^q := \mat{Z}_N^T \mat{A}_L^q\mat{Z}_N \in \R^{N\times N}$.
The matrices $\mat{A}_N^q\in\R^{N\times N}$ are independent of $\mu$ and can be computed only once and stored.
The same process applies to $\vct{f}_N(\mu)$ and $\vct{L}_{k,N}(\mu)$:

\begin{equation}
    \vct{f}_N(\mu) = \sum_{q=1}^{Q_f} \beta_F^q(\mu) \vct{f}^q_N, \quad
    \vct{L}_{k,N}(\mu) = \sum_{q=1}^{Q_\ell} \beta_\ell^q(\mu) \vct{L}^q_{k,N}
\end{equation}
For the outputs we study in this work, the decomposition of $\vct{L}_{k, N}$ has only one term since the output does not depend on the parameters.

This decomposition allows implementing an \emph{offline/online procedure}.
During the \emph{offline phase}, the basis of $V_N$ is constructed from the snapshots, as well as the matrices $\mat{A}_N^q$, $\vct{f}_N^q$, and $\vct{L}_{k, N}^q$ are computed and stored.
More details about this construction are given in \Cref{sec:generation-reduced-basis}.
This procedure is costly and is performed only once for the problem.
During the \emph{online phase}, the reduced system \Cref{eq:fe-syst-reduced} is solved for any parameter $\mu$.
The entire procedure is synthesized in \Cref{algo:offline-online}.

During the offline stage, two approaches can be used to select the size of the reduced basis $N$:
(i) an approach where we set the size of the reduced basis $N$ to a fixed value, and
(ii) an approach where we set a tolerance $\varepsilon_\tol$ on the error committed on the output.
The second approach is more interesting since it allows having a reduced basis of size $N$ that is adapted to the desired tolerance.

\begin{algorithm}
    \SetKwProg{Fn}{}{:}{}
    \Fn{Offline procedure}{
        \KwIn{Parameters $\mu_1, \cdots, \mu_N\in\Dmu$}
        Compute snapshots $\vct{T}^\fem(\mu_1), \cdots, \vct{T}^\fem(\mu_N)$\;
        Construct $\mat{Z}_N \gets \left[\vct{\xi_1}, \cdots, \vct{\xi_N}\right]$ (orthonormal)\;
        Construct the reduced matrices $(\mat{A}_N^q)_{1\leqslant q\leqslant Q_a}$, $(\vct{F}_N^p)_{1\leqslant p\leqslant Q_f}$, $(\vct{L}_{k, N})_{1\leqslant k\leqslant n_\text{output}}$\;
        \KwOut{Reduced basis and reduced matrices, stored.}
    }

    \Fn{Online procedure}{
        \KwIn{$\mu\in\Dmu$}
        Assemble $\mat{A}_N(\mu)$, $\vct{F}_N(\mu)$, $\vct{L}_{k, N}(\mu)$ using the saved matrices and the affine decomposition\;
        Solve $\mat{A}_N(\mu)\vct{u}_N(\mu) = \vct{F}_N(\mu)$\;
        Compute the output $s_{k,N}(\mu) = \vct{L}_{k, N}(\mu)^T\vct{u}_N(\mu)$\;
        \KwOut{$\vct{u}_N(\mu), s_{k,N}(\mu)$.}
    }

    \caption{Offline and online stages of the RBM.}
    \label{algo:offline-online}
\end{algorithm}

\subsubsection{Error estimates}
\label{sec:rbm-error-estimates}

\paragraph*{Error bound}

Given the reduced basis approximation $\vct{T}^{\rbm, N}(\mu)$ of the high fidelity resolution solution $\vct{T}^\fem(\mu)$ for $\mu\in\Dmu$, we defined the \emph{field error}
\begin{equation}
    \vct{e}(\mu):= \vct{T}^\fem(\mu) - \vct{T}^{\rbm, N}(\mu)
    \label{eq:error-field}
\end{equation}

We want to construct quantities $\Delta_N(\mu)$ and $\Delta_N^s(\mu)$ such that

\begin{equation}
    \norm{\vct{e}(\mu)}{V} \leqslant \Delta_N(\mu) \quad\text{and}\quad s(\mu)-s_N(\mu) \leqslant \Delta_N^s(\mu)
\end{equation}

Those quantities are named \emph{a posterior error bounds} \cite{10.1115/1.1448332,Rozza2007}.
To quantify the \emph{sharpness} and \emph{rigor} properties of the error bound, we introduce the \emph{effectivity}:

\begin{equation}
    \eta_N(\mu):= \dfrac{\Delta_N(\mu)}{\norm{\vct{e}(\mu)}{V}}
    \qquad
    \eta_N^s(\mu):= \dfrac{\Delta_N^s(\mu)}{s(\mu)-s_N(\mu)}
    \label{eq:effectivity}
\end{equation}

It has been proven in \cite{10.1115/1.1448332} that the error bound is \emph{rigorous}, namely that it is always greater than the error;
and \emph{sharp}, namely that it is as close as possible to the actual error.

These properties can be summarized as:
\begin{equation}
    1 \leqslant \eta_N(\mu) \leqslant \eta_{\ub}(\mu) \quad \forall \mu\in\Dmu
\end{equation}
where $\eta_{\ub}(\mu)$ is a the sharpness of the bound and is proven to be bounded \cite{10.1115/1.1448332} when $N$ increases.

Finally, to construct the reduced space, we require the error bound to be \emph{efficient}, that is, its evaluation is independent of the size of the high fidelity space $\N$.
This is critical when heuristic algorithms are used to construct the reduced space, such as the Greedy algorithm discussed in \Cref{sec:generation-reduced-basis}.

Such an error bound can be constructed efficiently from the \emph{residual} $r$ of the problem (\ref{eq:variational-form}),
a lower bound $\alpha_\lb(\mu)$ of the coercivity constant $\alpha(\mu)$ of $a_L(\cdot, \cdot; \mu)$, and the affine decomposition of $a_L$ and $f_L$:
\begin{equation}
    \alpha(\mu) = \inf_{v\in V}\frac{a(v, v; \mu)}{\norm{v}{V}^2},
    \quad
    r(v, \mu):= \ell(v; \mu) - a(u_N(\mu), v; \mu) \quad \forall v\in V
    \quad\text{and}\quad
    \Delta_N := \dfrac{\norm{r(\cdot, \mu)}{V'}}{\alpha_\lb(\mu)}
\end{equation}
For more details, refer to \cite{10.1115/1.1448332}.

\subsubsection{Generation of the reduced basis}
\label{sec:generation-reduced-basis}

The a posteriori error estimator introduced earlier provides an efficient criterion to select the desired dimension of the reduced space $N$, in the offline phase.
Given a fixed tolerance $\varepsilon_\tol$, we can greedily select the greatest $N$ such that the error bound is smaller than the tolerance.
In this section, we describe an algorithm to generate the reduced basis.

For this algorithm, a large set of parameters $\Xi_\train\subset \Dmu$ is required.
This set is called the \emph{training set}, and is generated log-randomly.
A first snapshot is computed for a given parameter $\mu_0\in\Dmu$.
To get the $N+1$-th snapshot to be inserted in the basis, we select the parameter $\mu^\star$ that maximizes the error bound $\Delta_N(\mu)$, for $\mu\in\Xi_\train$.
This step is performed until a selected tolerance for the maximal error bound is reached.
The greedy algorithm is presented in \Cref{algo:Greedy}.

\begin{algorithm}
    \caption{Greedy algorithm to construct the reduced basis.}
    \label{algo:Greedy}
    \KwIn{$\mu_0\in \Dmu$, $\Xi_\text{train}\subset \Dmu$ and $\varepsilon_\tol>0$}
    $S\gets [\mu_0]$\;
    \While {$\Delta_N^{\max} > \varepsilon_\tol$}{
        $\mu^\star\leftarrow \displaystyle\argmax_{\mu\in\Xi_\text{train}}\Delta_N(\mu)$ (and $\displaystyle\Delta_N^\text{max}\leftarrow \max_{\mu\in\Xi_\text{train}}\Delta_N(\mu)$)\;
        $V_{N+1} \gets \left\{\vct{\xi} = \vct{T}^\fem(\mu^\star)\right\} \cup V_{N}$\;
        Append $\mu^\star$ to $S$\;
        $N \gets N+1$\;
    }
    \KwOut{Sample $S$, reduced basis $V_N$}
\end{algorithm}

\subsection{Numerical results}
\label{sec:numerical-results}

\subsubsection{Mesh convergence}
\label{sec:verify:mesh-convergence}

In \Cref{sec:model-geo}, we detailed the geometry of the eyeball, derived from computer-aided design (CAD) data.
As illustrated in \Cref{fig:geo-eye}, certain regions exhibit greater complexity than others.
For instance, the lamina cribrosa is notably thinner, while the iris presents a less uniform structure.
Achieving an effective mesh requires a well-distributed arrangement of elements.
This is attainable through the application of a specialized meshing algorithm designed to tailor the mesh according to the geometric intricacies.
Utilizing the MMG library \cite{mmg}, we have generated a family of meshes with varying levels of refinement.
These meshes are used to our subsequent simulation processes.

\Cref{tab:mesh} displays the characteristics of the meshes, such as their characteristic size $h$ and the number of degrees of freedom (nDof) for both $\P_1$ and $\P_2$ finite element discretizations.

\begin{table}
\begin{center}
\begin{tabular}{*{4}{c}}%
    \toprule
    \textbf{Mesh} & $h$ & \textbf{nDof $\P_1$} & \textbf{nDof $\P_2$}\\
    \midrule
    \csvreader[head to column names, late after line=\\]{fig/eye/dat/meshInformation3D.csv}{}{
        \texttt{\mesh} &
        \pgfmathprintnumber{\hAvg} &
        \pgfmathprintnumber{\nDofPUn} &
        \pgfmathprintnumber{\nDofPDeux}
    }
    \bottomrule
\end{tabular}
\end{center}
\caption{Characteristics of the meshes.}
\label{tab:mesh}
\end{table}

In this section, we detail the outcomes of our mesh convergence analysis.
This analysis involves solving the given problem across various mesh configurations and subsequently comparing the resultant data.
To conduct this study, we first solve the model denoted as  $\Em_\text{NL}(\bar{\mu})$.
Following this, we compute the output $T^\fem_{\text{NL}, O}$ representing the temperature at the cornea's center as determined by the high-fidelity model.
The primary objective of this analysis is to ascertain whether the obtained temperature values demonstrate convergence towards a consistent value.
\Cref{fig:conv-output} illustrates the results of our mesh convergence study, clearly indicating a pattern of satisfactory convergence.
We select for further comparisons the values obtained for \texttt{M3} and for $\P_2$.

\begin{figure}
    \centering
\begin{tikzpicture}

\definecolor{darkgray176}{RGB}{176,176,176}
\definecolor{darkorange25512714}{RGB}{255,127,14}
\definecolor{lightgray204}{RGB}{204,204,204}
\definecolor{steelblue31119180}{RGB}{31,119,180}

\begin{axis}[
    legend cell align={left},
    legend style={fill opacity=0.8, draw opacity=1, text opacity=1, draw=lightgray204},
    log basis x={10},
    tick align=outside,
    tick pos=left,
    unbounded coords=jump,
    xmajorgrids, ymajorgrids, xminorgrids,
    x grid style={darkgray176},
    xlabel={Number of degrees of freedom $\N$},
    xmode=log,
    xtick style={color=black},
    y grid style={darkgray176},
    ylabel={$T_{\text{NL},O}^\fem$ [K]},
    ymin=309.381830780537, ymax=309.390535547796,
    ytick style={color=black},
    yticklabel style={scaled ticks=false,
                                /pgf/number format/fixed,
                                /pgf/number format/precision=3},
]
\addplot [semithick, colorA, mark=*, mark size=2, mark options={solid}]
table {%
    120581 309.388748368823
    207845 309.387459458495
    995906 309.386518710653
    7360346 309.383311103605
};
\addlegendentry{$\P_1$}
\addplot [semithick, colorB!50!black, mark=*, mark size=4, mark options={solid}, forget plot]
table {%
    1580932 309.382354692412
};
\addplot [semithick, colorB, mark=*, mark size=2, mark options={solid}]
table {%
    326928 309.386607120489
    472693 309.385179086414
    882826 309.382478681363
    1580932 309.382354692412
    7870352 309.382226451776
    58655836 nan
};
\addlegendentry{$\P_2$}
\draw (axis cs:326928,309.386607120489) node[
anchor=base west,
text=black,
rotate=0.0
]{\texttt{M0}};
\draw (axis cs:472693,309.385179086414) node[
    anchor=base west,
    text=black,
    rotate=0.0
]{\texttt{M1}};
\draw (axis cs:120581,309.388748368823) node[
    anchor=base west,
    text=black,
    rotate=0.0
]{\texttt{M2}};
\draw (axis cs:882826,309.382478681363) node[
    anchor=base west,
    text=black,
    rotate=0.0
]{\texttt{M2}};
\draw (axis cs:207845,309.387459458495) node[
    anchor=base west,
    text=black,
    rotate=0.0
]{\texttt{M3}};
\draw (axis cs:1580932,309.382354692412) node[
    anchor=base west,
    text=black,
    rotate=0.0
]{\texttt{M3}};
\draw (axis cs:995906,309.386518710653) node[
    anchor=base west,
    text=black,
    rotate=0.0
]{\texttt{M4}};
\draw (axis cs:7870352,309.382226451776) node[
    anchor=base west,
    text=black,
    rotate=0.0
]{\texttt{M4}};
\draw (axis cs:7360346,309.383311103605) node[
    anchor=base west,
    text=black,
    rotate=0.0
]{\texttt{M5}};

\end{axis}

\end{tikzpicture}
    \caption{Temperature at the center of the cornea computed with the high-fidelity model $\Em_\text{NL}(\bar{\mu})$, depending on the level of refinement of the mesh.}
    \label{fig:conv-output}
\end{figure}


\subsubsection{Scalability}

In this section, we explore the scalability of our computational framework.
This involves measuring the time required to solve the model in relation to the number of MPI parallel processes utilized.
The time measured pertains to the duration necessary for assembling the algebraic system and solving the problem, as per \Cref{algo:hf}.
Our experiments utilized mesh \texttt{M3}, with both $\P_1$ and  $\P_2$ discretizations.
The results, presented in \Cref{fig:scalability}, demonstrate satisfactory scalability:
the execution time decreases as the number of parallel processes increases.
However, we observed that beyond 12 processes, the reduction in execution time becomes less significant.
Consequently, for optimal efficiency, we have selected 12 processes for our subsequent analyses.
This study sets the stage for a subsequent comparison with a reduced-order model, which employs a reduced basis with reliable, certified output bounds derived from the high-fidelity solutions.
This comparison aims to highlight that, while parallel computing can accelerate the high-fidelity computation, the reduced-order approach offers even more substantial computational gains.


\begin{figure}
    \centering
    \def\scl{0.95}
\begin{tikzpicture}[scale=\scl]

    \definecolor{darkgray176}{RGB}{176,176,176}
    \definecolor{lightgray204}{RGB}{204,204,204}

    \begin{axis}[
        legend cell align={left},
        legend style={fill opacity=1, draw opacity=1, text opacity=1, draw=lightgray204},
        tick align=outside,
        tick pos=left,
        ymode=log,
        xmajorgrids, ymajorgrids,
        x grid style={darkgray176},
        xlabel={Number of parallel processes \texttt{np}},
        xtick style={color=black},
        y grid style={darkgray176},
        ylabel={Time to solve the problem [s]},
        ytick style={color=black}
    ]
    \addplot [semithick, colorA, mark=*, mark size=2, mark options={solid}]
        table [x=np, y=P1] {fig/eye/results/convergence/scalability.dat};
    \addlegendentry{$\P_1$}
    \addplot [semithick, colorB, mark=*, mark size=2, mark options={solid}]
        table [x=np, y=P2] {fig/eye/results/convergence/scalability.dat};
    \addlegendentry{$\P_2$}
    \end{axis}

\end{tikzpicture}
\begin{tikzpicture}[scale=\scl]

    \definecolor{darkgray176}{RGB}{176,176,176}
    \definecolor{lightgray204}{RGB}{204,204,204}

    \begin{axis}[
        legend cell align={left},
        legend style={fill opacity=1, draw opacity=1, text opacity=1, draw=lightgray204, anchor=north west, at={(0.03,0.97)}},
        tick align=outside,
        tick pos=left,
        xmajorgrids, ymajorgrids,
        x grid style={darkgray176},
        xlabel={Number of parallel processes \texttt{np}},
        xtick style={color=black},
        y grid style={darkgray176},
        ylabel={Speed-up},
        ytick style={color=black}
    ]
    \addplot [semithick, colorA, mark=*, mark size=2, mark options={solid}]
        table [x=np, y=speedupP1] {fig/eye/results/convergence/scalability.dat};
    \addlegendentry{$\P_1$}
    \addplot [semithick, colorB, mark=*, mark size=2, mark options={solid}]
        table [x=np, y=speedupP2] {fig/eye/results/convergence/scalability.dat};
    \addlegendentry{$\P_2$}
    \end{axis}

\end{tikzpicture}
    \caption{Time of execution to run $\Em_\text{L}^\N(\bar{\mu})$ and corresponding speed-up, for an increasing number of parallel processes.
    Simulations are performed on the \texttt{M3}.}
    \label{fig:scalability}
\end{figure}

\subsubsection{Linearized model}
\label{sec:linearized-model}

We now compare the results obtained after solving $\Em_\text{NL}(\bar{\mu})$ against $\Em_\text{L}(\bar{\mu})$.
We denote the solution of the nonlinear model $\Em_\text{NL}(\bar{\mu})$ by $T_\text{NL}(\bar{\mu})$,
and by $T_\text{L}(\bar{\mu})$ the solution of the linearized model $\Em_\text{L}(\bar{\mu})$,
and compute the relative error:

\begin{equation}
    e_\text{lin}(\mu) = \dfrac{\norm{T_\text{NL}(\mu)-T_\text{L}(\mu)}{L^2(\Omega)}}{\norm{T_\text{NL}(\mu)}{L^2(\Omega)}}
\end{equation}

For $\mu=\bar{\mu}$, we get $e_\text{lin} = \pgfmathprintnumber[/pgf/number format/.cd, std, fixed zerofill, precision=4]{4.1760143214956727e-07}$.
In \Cref{fig:diffLin}, we plot the difference between the two solutions $|T_\text{NL}(\x) - T_\text{L}(\x)|$ for $\x\in\Omega$.
We notice that the difference is the largest on the front of the eye, where the boundary condition has been changed.
Whereas at the back of the eye, the solutions are superposed.
We also compute the maximal difference: $\pgfmathprintnumber[/pgf/number format/.cd,std,fixed zerofill,precision=4]{0.002166748046875}$~\unit{\kelvin}.
Therefore, we consider in the sequel that the linearized model does not induce a significant error in the results.

\begin{figure}
    \centering
    \begin{tikzpicture}[scale=0.8]
        \begin{axis}[
            colorbar,
            colormap/jet, 
            enlargelimits=false,
            point meta max=0.002166748046875,
            point meta min=0,
            axis line style = {draw=none},
            tick style = {draw=none},
            xtick = \empty, ytick = \empty,
            colorbar style={
                ylabel = {$|T_\text{NL} - T_\text{L}|$},
                height=0.7*\pgfkeysvalueof{/pgfplots/parent axis height},
                at={(1.05,0.5)}, 
                anchor=west, 
            },
            width=0.8\textwidth
        ]
            \addplot graphics [includegraphics cmd=\pgfimage, xmin=0, xmax=1, ymin=0, ymax=1] {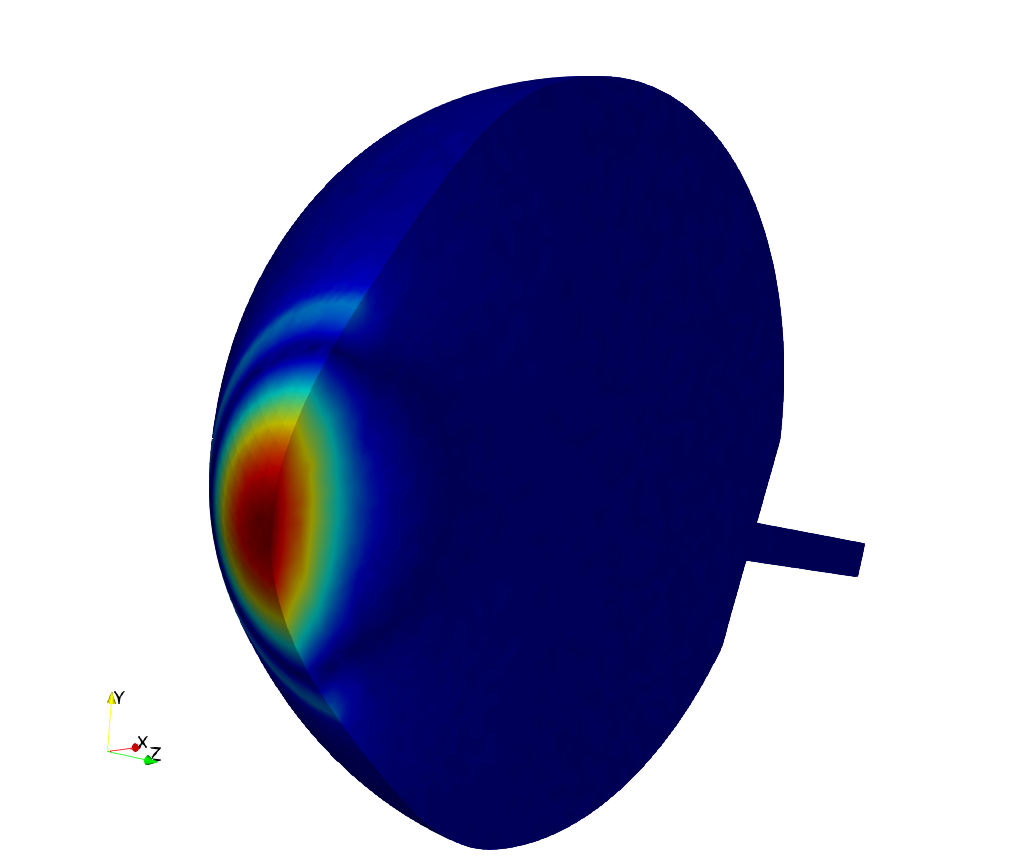};
        \end{axis}
    \end{tikzpicture}
    \caption{Difference of the temperature between the full model and the linearized model, computed on the mesh \texttt{M3} with $\P_2$ elements, and the baseline values $\bar{\mu}$ for the parameters.}
    \label{fig:diffLin}
\end{figure}

\Cref{fig:hf:linear} displays the results of the simulation of the linear model $\Em_\text{L}(\mu)$, for three parameters $\mu$:
$\bar{\mu}$ the baseline value parameters, $\mu_{\text{min}}$ (resp. $\mu_\text{max}$) where each component is the lowest (resp. highest) bound of its range of values.

\begin{figure}
    \centering
    \def\mysize{0.3\textwidth}
    \begin{tabular}{*{3}{c}}
        $\bar{\mu}$ & $\mu_{\text{min}}$ & $\mu_{\text{max}}$\\

        \includegraphics[width=\mysize]{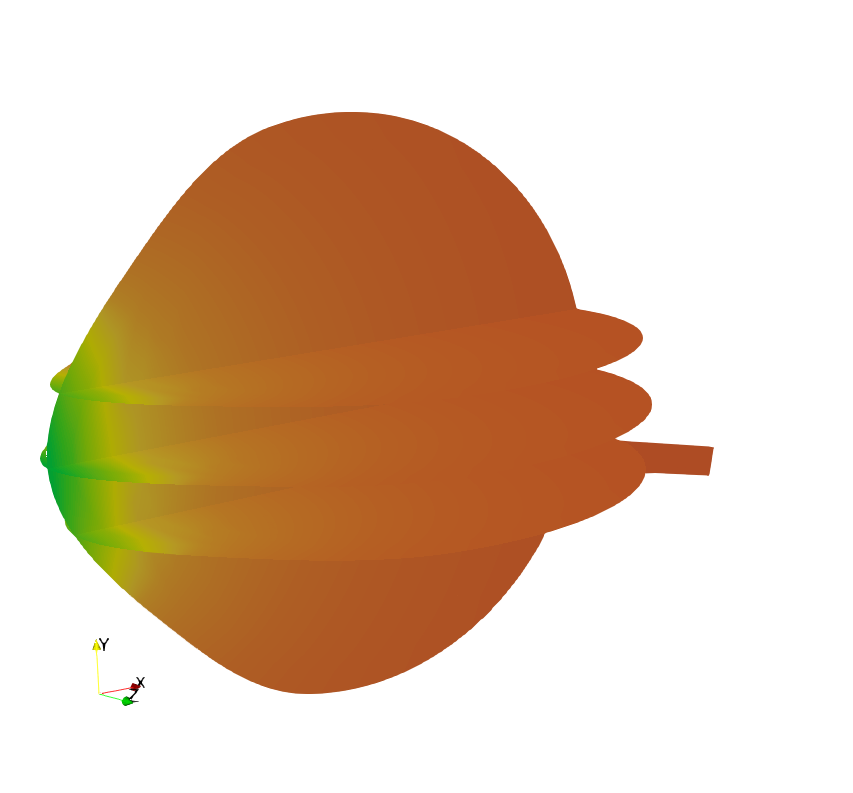} &
        \includegraphics[width=\mysize]{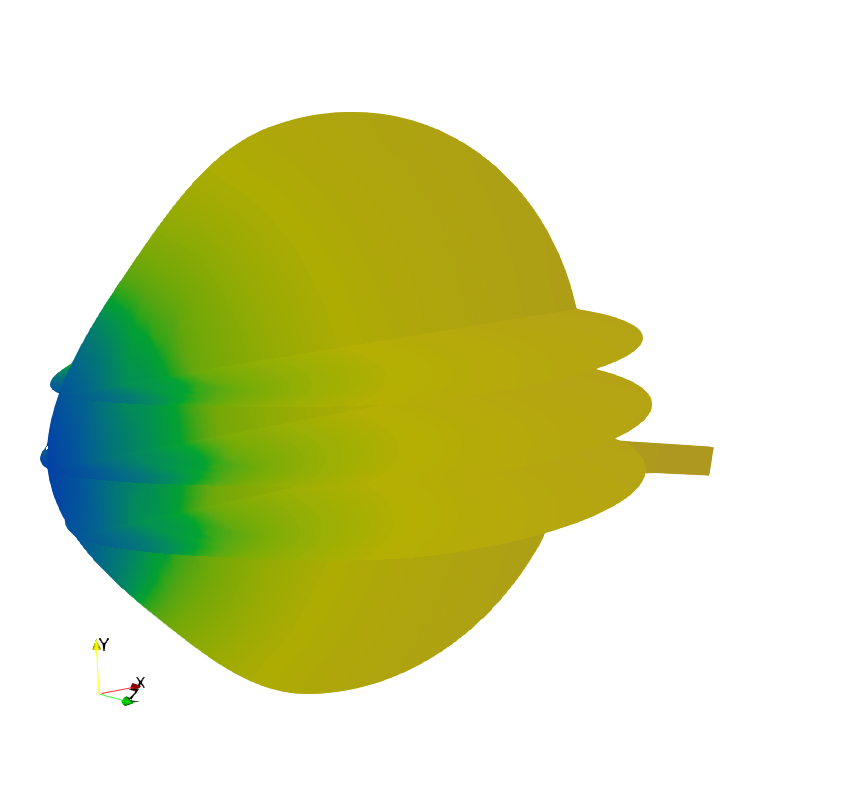} &
        \includegraphics[width=\mysize]{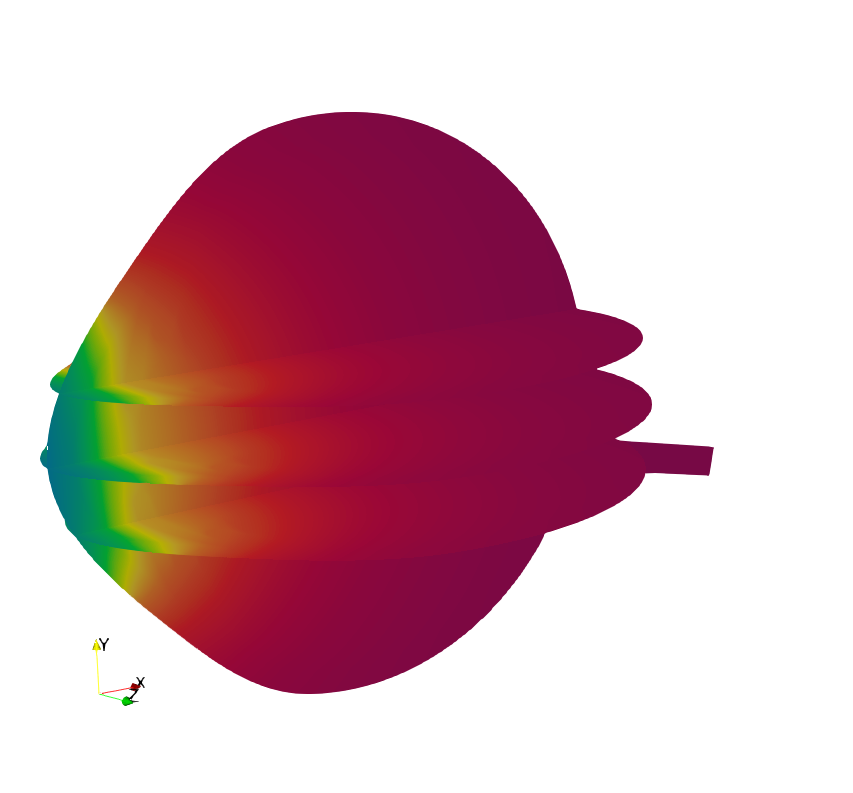} \\

        \multicolumn{3}{c}{\begin{tikzpicture}
            \begin{axis}[
                colorbar horizontal,
                colormap={blues}{rgb255(0.0cm)=(5,97,254);
                rgb255(0.023809523809523808cm)=(5,108,247);
                rgb255(0.047619047619047616cm)=(5,119,239);
                rgb255(0.07142857142857142cm)=(5,130,232);
                rgb255(0.09523809523809523cm)=(5,139,222);
                rgb255(0.11904761904761904cm)=(5,148,212);
                rgb255(0.14285714285714285cm)=(5,157,202);
                rgb255(0.16666666666666666cm)=(5,166,191);
                rgb255(0.19047619047619047cm)=(5,174,179);
                rgb255(0.21428571428571427cm)=(5,183,167);
                rgb255(0.23809523809523808cm)=(5,193,153);
                rgb255(0.2619047619047619cm)=(5,202,140);
                rgb255(0.2857142857142857cm)=(5,211,126);
                rgb255(0.30952380952380953cm)=(5,220,109);
                rgb255(0.3333333333333333cm)=(5,228,91);
                rgb255(0.35714285714285715cm)=(4,237,74);
                rgb255(0.38095238095238093cm)=(69,242,39);
                rgb255(0.40476190476190477cm)=(125,245,28);
                rgb255(0.42857142857142855cm)=(164,249,11);
                rgb255(0.4523809523809524cm)=(194,251,8);
                rgb255(0.47619047619047616cm)=(224,252,5);
                rgb255(0.5cm)=(254,254,3);
                rgb255(0.5238095238095238cm)=(254,243,20);
                rgb255(0.5476190476190477cm)=(254,232,37);
                rgb255(0.5714285714285714cm)=(254,220,55);
                rgb255(0.5952380952380952cm)=(254,208,55);
                rgb255(0.6190476190476191cm)=(254,196,55);
                rgb255(0.6428571428571429cm)=(254,183,55);
                rgb255(0.6666666666666666cm)=(254,171,55);
                rgb255(0.6904761904761905cm)=(254,159,55);
                rgb255(0.7142857142857143cm)=(254,147,55);
                rgb255(0.7380952380952381cm)=(254,132,55);
                rgb255(0.7619047619047619cm)=(254,118,55);
                rgb255(0.7857142857142857cm)=(254,104,55);
                rgb255(0.8095238095238095cm)=(253,84,53);
                rgb255(0.8333333333333334cm)=(251,66,48);
                rgb255(0.8571428571428571cm)=(252,37,53);
                rgb255(0.8809523809523809cm)=(242,29,64);
                rgb255(0.9047619047619048cm)=(230,20,74);
                rgb255(0.9285714285714286cm)=(218,10,84);
                rgb255(0.9523809523809523cm)=(203,11,91);
                rgb255(0.9761904761904762cm)=(189,11,98);
                rgb255(1.0cm)=(174,12,105);},
                colorbar style={
                    width=0.6\textwidth,
                    tick label style={font=\footnotesize},
                },
                ymin=0, ymax=1,
                point meta min=302,
                point meta max=310,
                axis lines=none,
            ]
                \addplot [draw=none] coordinates {(302,0) (310,0)};
            \end{axis}
        \end{tikzpicture}}
    \end{tabular}
    \caption{Distribution of the temperature [\unit{\kelvin}] in the eyeball from the linear model $\Em_\text{L}(\mu)$.}
    \label{fig:hf:linear}
\end{figure}

\subsubsection{Verifications of the reduced basis model}

We compare the results of the reduced basis method with the output of the high fidelity FEM model.
We generate a sample $\Xi_\test$ of 100 parameters in $\Dmu$.
For $\mu\in\Xi_\test$, we compute on the one hand $T_O^\fem(\mu)$, the value of the temperature at point $O$ from the model $\Em_\text{L}(\mu)$,
and on the other hand $T_O^{\rbm, N}(\mu)$, the value of the temperature for the reduced basis model, with a basis of size $N$.
In \Cref{fig:fem-vs-rbm:error}, the value of the error $|T_O^\fem(\mu) - T_O^{\rbm, N}(\mu)|$ is plotted for each $\mu\in\Xi_\test$,
for various reduced basis sizes $N$.
Statistics on the error committed over the sample $\Xi_\test$ are displayed in \Cref{fig:fem-vs-rbm:stats},
as well as the effectivity $\eta_N(\mu)$ in \Cref{fig:fem-vs-rbm:effectivity}.

We observe that even for small values of $N$, the error on the output is remarkably small: an error of $10^{-4}$ is reached for $N=6$.
On the other hand, we find that the convergence on the output is twice as fast as the convergence on the field,
as predicted by the theoretical error estimate \cite[Eq. (36)]{10.1115/1.1448332}.

Note that the anticipated error behavior aligns with theory when the output functional maintains continuity.
In this work, we deviate from the standard case, attributed to the utilization of the Dirac functional in output computation used for pointwise evaluation.
Nevertheless, a similar behavior is observed, and additional insights into this phenomenon will be provided in future work.

\begin{figure}
    \centering
    \def\scl{1}
    \subfigure[Error on RBM for various reduced basis sizes with error bound $\Delta_N^s(\mu)$ \label{fig:fem-vs-rbm:error}]{
\begin{tikzpicture}[scale=\scl]

\definecolor{3cf210509b}{RGB}{0,128,0}
\definecolor{bc8ae11743}{RGB}{128,0,128}
\definecolor{763bd53097}{RGB}{255,0,0}
\definecolor{416c48ee9e}{RGB}{0,0,255}
\definecolor{5efb101de4}{RGB}{255,165,0}

\pgfplotstableread[col sep=comma]{\currfiledir/data/pointO_errorOnOutput.csv}\val
\pgfplotstableread[col sep=comma]{\currfiledir/data/pointO_errorBound.csv}\err

\begin{axis}[
  xlabel={$T_O^\fem$ [K]},
  ylabel={$|T_O^\fem - T_O^{\rbm, N}|$ [K]},
  ymode=log,
  xmin=289.99322094635886, xmax=306.57046532302854,
  legend columns=5,
  transpose legend,
  legend style={
    fill opacity=0.8,
    draw opacity=1,
    text opacity=1,
    at={(1.03,1.0)},
    anchor=north west
  },
]

  \path[name path=axis] (axis cs:289.99322094635886,1e-9) -- (axis cs:306.57046532302854,1e-9);

  \addplot+ [only marks, name path=N2, mark=*, opacity=0.5, mark options={solid, fill=763bd53097, color=763bd53097}]
    table [col sep=comma, x=T_fem, y=2]
    {\val};
  \addlegendentry{$N=2$}

  \addplot+ [mark=none, name path=error2, opacity=0.2, forget plot]
    table [col sep=comma, x=T_fem, y=2]
    {\err};

  \addplot+ [only marks, name path=N4, mark=*, opacity=0.5, mark options={solid, fill=416c48ee9e, color=416c48ee9e}]
    table [col sep=comma, x=T_fem, y=4]
    {\val};
  \addlegendentry{$N=4$}

  \addplot+ [mark=none, name path=error4, opacity=0.2, forget plot]
    table [col sep=comma, x=T_fem, y=4]
    {\err};

  \addplot+ [only marks, name path=N6, mark=*, opacity=0.5, mark options={solid, fill=3cf210509b, color=3cf210509b}]
    table [col sep=comma, x=T_fem, y=6]
    {\val};
  \addlegendentry{$N=6$}

  \addplot+ [mark=none, name path=error6, opacity=0.2, forget plot]
    table [col sep=comma, x=T_fem, y=6]
    {\err};

  \addplot+ [only marks, name path=N8, mark=*, opacity=0.5, mark options={solid, fill=5efb101de4, color=5efb101de4}]
    table [col sep=comma, x=T_fem, y=8]
    {\val};
  \addlegendentry{$N=8$}

  \addplot+ [mark=none, name path=error8, opacity=0.2, forget plot]
    table [col sep=comma, x=T_fem, y=8]
    {\err};

  \addplot+ [only marks, name path=N10, mark=*, opacity=0.5, mark options={solid, fill=bc8ae11743, color=bc8ae11743}]
    table [col sep=comma, x=T_fem, y=10]
    {\val};
  \addlegendentry{$N=10$}

  \addplot+ [mark=none, name path=error10, opacity=0.2, forget plot]
    table [col sep=comma, x=T_fem, y=10]
    {\err};

  \addplot [thick, color=763bd53097, fill=763bd53097, fill opacity=0.2] fill between [of=error2 and error4];
  \addlegendentry{$\Delta_2^s(\mu)$}
  \addplot [thick, color=416c48ee9e, fill=416c48ee9e, fill opacity=0.2] fill between [of=error4 and error6];
  \addlegendentry{$\Delta_4^s(\mu)$}
  \addplot [thick, color=3cf210509b, fill=3cf210509b, fill opacity=0.2] fill between [of=error6 and error8];
  \addlegendentry{$\Delta_6^s(\mu)$}
  \addplot [thick, color=5efb101de4, fill=5efb101de4, fill opacity=0.2] fill between [of=error8 and error10];
  \addlegendentry{$\Delta_8^s(\mu)$}
  \addplot [thick, color=bc8ae11743, fill=bc8ae11743, fill opacity=0.2] fill between [of=error10 and axis];
  \addlegendentry{$\Delta_{10}^s(\mu)$}

\end{axis}
\end{tikzpicture}
    }
    \def\scl{0.88}
    \subfigure[Convergence of the errors on the field and the output on point $O$, for $\mu\in\Xi_\test$, for various reduced basis sizes. The maximal, minimal, and mean values are represented.\label{fig:fem-vs-rbm:stats}]{
\pgfplotstableread[col sep=comma]{\currfiledir/data/convergence_error_O.csv}\errors
\begin{tikzpicture}[scale=\scl]

\begin{axis}[
    log basis y={10},
    tick align=outside,
    tick pos=left,
    x grid style={darkgray176},
    xlabel={$N$},
    xmin=1.6, xmax=10.4,
    xtick style={color=black},
    y grid style={darkgray176},
    ylabel={Error computed},
    ymode=log,
    ytick style={color=black},
    legend style={fill opacity=0.8, draw opacity=1, text opacity=1, draw=lightgray204, nodes={scale=0.8, transform shape}}
]
\addplot [name path=Mmax, semithick, steelblue31119180, mark=none, mark size=3, mark options={solid}, forget plot] table[y=outputMax] {\errors};
\addplot [name path=Mmin, semithick, steelblue31119180, mark=none, mark size=3, mark options={solid}, forget plot] table[y=outputMin] {\errors};
\addplot [name path=Mmean, semithick, steelblue31119180, mark=*, mark size=3, mark options={solid}, dashed, forget plot] table[y=outputMean] {\errors};
\addplot [color=steelblue31119180, fill=steelblue31119180, opacity=0.2] fill between [of=Mmax and Mmin];
\addlegendentry{$|T^\fem_O(\mu) - T^{\rbm, N}_O(\mu)|$}
\addplot+[colorB, draw=none, mark=none, forget plot] table[x=N, y={create col/linear regression={y=outputMean}}]{\errors};
\xdef\slopeO{\pgfplotstableregressiona}

\addplot [name path=MFmax, semithick, forestgreen4416044, mark=none, mark size=3, mark options={solid}, forget plot] table[y=fieldMax] {\errors};
\addplot [name path=MFmin, semithick, forestgreen4416044, mark=none, mark size=3, mark options={solid}, forget plot] table[y=fieldMin] {\errors};
\addplot [name path=MFmean, semithick, forestgreen4416044, mark=*, mark size=3, mark options={solid}, dashed, forget plot] table[y=fieldMean] {\errors};
\addplot [color=forestgreen4416044, fill=forestgreen4416044, opacity=0.2] fill between [of=MFmax and MFmin];
\addlegendentry{$\norm{T^\fem(\mu) - T^{\rbm, N}(\mu)}{\mu}$}
\addplot+[colorB, draw=none, mark=none, forget plot] table[x=N, y={create col/linear regression={y=fieldMean}}]{\errors};
\xdef\slopeField{\pgfplotstableregressiona}

\logLogSlopeTriangle{0.15}{0.1}{0.2}{\slopeO}{steelblue31119180}
\logLogSlopeTriangle{0.15}{0.1}{0.1}{\slopeField}{forestgreen4416044}

\end{axis}

\end{tikzpicture}
    }
    \subfigure[Stability of the effectivity $\eta_N^s(\mu) = \frac{\Delta_N^s(\mu)}{|T^\fem_O(\mu) - T^{\rbm, N}_O(\mu)|}$ and $\eta_N^\text{en}(\mu) = \frac{\Delta_N^\text{en}(\mu)}{\norm{T^\fem(\mu) - T^{\rbm, N}(\mu)}{\mu}}$ for $\mu\in\Xi_\test$, for various reduced basis sizes. The full red line represents the theoretical lower bound of the effectivity, \textrm{i.e.\ } 1.\label{fig:fem-vs-rbm:effectivity}]{
\begin{tikzpicture}[scale=\scl]

\definecolor{darkgray176}{RGB}{176,176,176}

\begin{axis}[
log basis y={10},
tick align=outside,
tick pos=left,
x grid style={darkgray176},
xlabel={$N$},
xmin=1.6, xmax=10.4,
xtick style={color=black},
y grid style={darkgray176},
ylabel={$\eta_N(\mu)$},
ymode=log,
ytick style={color=black},
legend style={fill opacity=0.8, draw opacity=1, text opacity=1, draw=lightgray204}
]
\addplot [name path=Mmax, semithick, steelblue31119180, mark=none, mark size=3, mark options={solid}, forget plot]
table {%
2 1066.8884150652973
4 275.5130537402244
6 5175.283748411159
8 227.51000861915077
10 327.78324189653006
};
\addplot [name path=Mmin, semithick, steelblue31119180, mark=none, mark size=3, mark options={solid}, forget plot]
table {%
2 3.6036293609127763
4 4.409972845317161
6 2.769743503906592
8 2.918187278624139
10 5.222510930047634
};
\addplot [name path=Mmean, semithick, steelblue31119180, mark=*, mark size=3, mark options={solid}, dashed, forget plot]
table {%
2 38.83312731510946
4 13.689024117830636
6 87.9511500702313
8 16.080183702477342
10 15.459982089346433
};
\addplot [color=steelblue31119180, fill=steelblue31119180, opacity=0.2] fill between [of=Mmax and Mmin];
\addlegendentry{$\eta_N^s(\mu)$}

\addplot [name path=MFmax, semithick, forestgreen4416044, mark=none, mark size=3, mark options={solid}, forget plot]
table {%
2 1.5343413306443425
4 1.4774853602885867
6 1.4666194896909972
8 1.4666340697067723
10 1.5079660029253543
};
\addplot [name path=MFmin, semithick, forestgreen4416044, mark=none, mark size=3, mark options={solid}, forget plot]
table {%
2 1.3528010083235558
4 1.3138862049682665
6 1.2603680389132252
8 1.3041573261775927
10 1.2920726294213412
};
\addplot [name path=MFmean, semithick, forestgreen4416044, mark=*, mark size=3, mark options={solid}, dashed, forget plot]
table {%
2 1.439823770140758
4 1.4082529053910926
6 1.407758311277011
8 1.3879751517983197
10 1.4039526659084272
};
\addplot [color=forestgreen4416044, fill=forestgreen4416044, opacity=0.2] fill between [of=MFmax and MFmin];
\addlegendentry{$\eta_N^\text{en}(\mu)$}

\addplot[mark=none, red, samples=2] table {
    1.6 1
    10.4 1
};
\end{axis}

\end{tikzpicture}
    }

    \caption{Comparison of the temperature between the full order model and the reduced basis model, tested over a sample $\Xi_\test\subset\Dmu$ of 100 parameters.
    }
    \label{fig:fem-vs-rbm}
\end{figure}




\Cref{tab:time-execution} offers a comparative analysis of execution times for solving the heat transfer problem.
We first discuss the execution times for the high-fidelity solution, encompassing both $\P_1$ and $\P_2$  finite-element discretizations.
The measured time, denoted as $t_\text{exec}$, includes assembling and solving the problem.
In contrast, we also evaluate the execution time of the online phase of our certified reliable reduced basis model.
This comparison highlights a significant reduction in the time required to assemble and solve the problem using our advanced reduced basis approach.
Importantly, this efficiency does not compromise accuracy; the results from the reduced basis model are effectively exact with respect to the high fidelity model.
As anticipated in our earlier scalability analysis, we achieve remarkable computational gains with our model, reinforcing the benefits of our approach in both precision and performance.

\begin{table}
    \centering
    \begin{tabular}{*{5}{c}}
        \toprule
                        & \multicolumn{3}{c}{Finite element resolution}                                    & Reduced model \\
                        & \multicolumn{3}{c}{$T^\fem(\mu)$}                                                & $T^{\rbm, N}(\mu), \Delta_N(\mu)$ \\
        \cmidrule(lr){2-4}
        \cmidrule(lr){5-5}
                        & $\P_1$               & $\P_2$ (\texttt{np=1})         &  $\P_2$ (\texttt{np=12})  &  \\
        \midrule
        Problem size    & $\N = 207~845$       & \multicolumn{2}{c}{$\N = 1~580~932$}                       & $N = 10$ \\
        $t_\text{exec}$ & \qty{5.534}{\second} & \qty{62.432}{\second}          & \qty{10.76}{\second}      & \qty{2.88e-04}{\second}\\
        speed-up        & 11.69                & 1                              & 5.80                      & \textbf{\qty[text-series-to-math]{2.17e5}{}}\\
        \bottomrule
    \end{tabular}
    \caption{Times of execution, using mesh \texttt{M3} for high fidelity simulations.}
    \label{tab:time-execution}
\end{table}

The reduced bases constructed for the various outputs of interest are generated with the greedy algorithm (\Cref{algo:Greedy}),
using a maximal tolerance $\varepsilon_\tol=\pgfmathprintnumber{1e-6}$ and a maximal size for the basis $N = 20$.
In practice, the tolerance is reached for $N = 10$ to 12.

\subsection{Validation and comparison with previous studies}
\label{sec:validation}

We present in this section a thorough comparison between the results of this work and previously published data on the temperature of the eye,
obtained either by experimental procedures or via computational modeling.
Note that only scarce data are available for the entire human eyeball, since most of the measurement techniques estimated only the surface temperature of the cornea.
In particular, \cite{Efron1989OcularST} gathers the outputs of 19 studies conducted with various instruments (mercury bulbs, liquid crystal thermometers or infrared thermometers),
and the mean value reported, according to \cite{NG2006268}, is $T_O^\text{exp} = \qty{307.15}{\kelvin}$.
The temperature at the center of the cornea computed with baseline value from our model is $T_O^\fem(\bar{\mu}) = \pgfmathprintnumber{306.0215598687341}~\unit{\kelvin}$,
which lies in the interval of results found in the literature (see \cite[Table 1]{Efron1989OcularST} and \cite[Table 9]{NG2006268}).

Additionally, in \cite{Efron1989OcularST}, the temperature is measured along an imaginary horizontal line, the \emph{Geometrical Center of the Cornea} (GCC), as described in \Cref{fig:geo-eye}, on a panel of 21 subjects.
The experimental data are displayed in \Cref{fig:res-gcc}, together with the findings of the present work. 
On the horizontal axis, the distance to the center of the eye is represented, and on the vertical axis is the temperature difference to the central one (mean value and standard deviation).
Note that as the geometry of the simulated eye is not the same as the one used in the experiment, we scaled the results over the $x$-axis.
The result shows that the high fidelity model is able to closely replicate the same behavior as the one experimentally measured,
and the model $\Em_\text{L}(\bar{\mu})$ provides very close values (see \Cref{sec:linearized-model}).
Moreover, thanks to the error bound introduced in \Cref{sec:rbm-error-estimates} for the RBM, the approach is considered to be valid for the sensitivity analysis procedure hereafter.

\pgfplotstableread[col sep=comma]{fig/eye/results/GCC/gcc_efron.csv}\gccEfron
\pgfplotstableread[col sep=comma]{fig/eye/results/GCC/gccFeelFullP2.csv}\gccfeel
\pgfplotstableread[col sep=comma]{fig/eye/results/GCC/gcc_li_3D.csv}\gccli

\begin{figure}
    \centering
    \begin{tikzpicture}[trim axis left, trim axis right]
        \begin{axis}
        [legend style = { at = {(1.05,1)}, anchor=north west, nodes={scale=1, transform shape}},
        xlabel = {Distance to center [\unit{\cm}]}, ylabel = {Difference of temperature [\unit{\kelvin}]}, grid = both,
        xmin=-1.2e-2, xmax=1.2e-2, domain=-1e-2:1e-2,
        scaled x ticks={real:1e-2}, xtick scale label code/.code={\,}
        ]
        \addplot+[colorB, mark=none, line width=1pt, opacity=0.4, forget plot] {10807.7*x*x + 0.999464*x};
        \addplot+[colorB, only marks, mark=+, line width=1pt, mark size=4pt] table[col sep=comma,x=dx, y=mean_difference] {\gccEfron};
        \addlegendentry{Measured values \cite{Efron1989OcularST}}
        \addplot+[colorB!80, only marks, mark=+, line width=1pt, mark size=4pt, forget plot] plot [error bars/.cd, y dir=both, y explicit]
            table[col sep=comma,x=dx, y=mean_difference, y error plus=std_up, y error minus=std_down] {\gccEfron};

        \addplot+[colorFeel3, mark=none, line width=1pt, mark size=4pt] table[col sep=comma,x=dx_scaled, y=dT] {\gccfeel};
        \addlegendentry{$\Em_\text{NL}^\N(\bar{\mu}) \equiv \Em_\text{L}^\N(\bar{\mu})$ model}

        \end{axis}
    \end{tikzpicture}
    \caption{Temperature on the GCC: experimental data (mean and standard deviation) vs. numerical results. In this analysis, we cannot distinguish graphical difference between the linear and non-linear models.}
    \label{fig:res-gcc}

\end{figure}
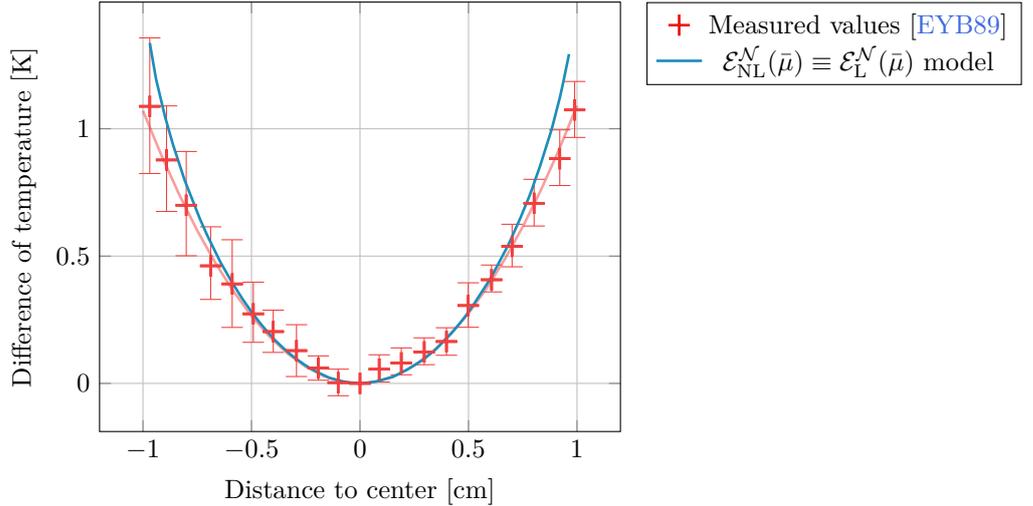

In \Cref{fig:res:line}, we present a comparative analysis between the results of our current study and various numerical findings reported in existing literature.
This comparison features temperatures calculated along a line traversing the eye’s center, the specific location of which is depicted in \Cref{fig:outputs}.
This comparative approach is crucial as it verifies the accuracy of our computed values, encompassing not just the corneal surface but also the eye’s internal tissue structures.
It is noteworthy that our analysis includes a mix of both 2D and 3D results, derived from both non-linear and linearized models.

\begin{figure}
    \centering
    \input{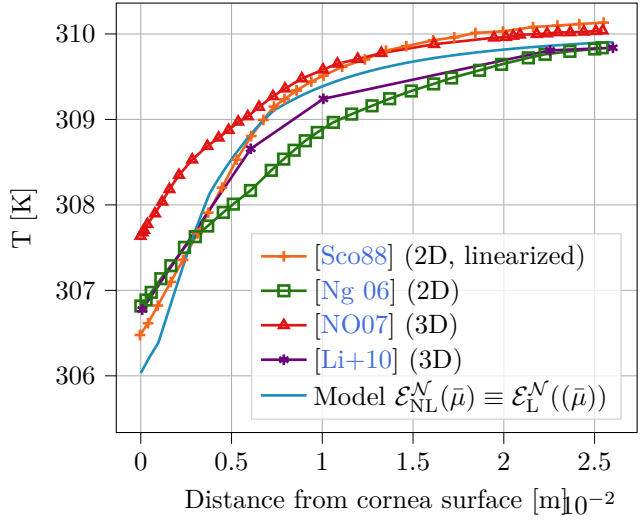}
    \caption{Temperature on a line going through the center of the eye (see \Cref{fig:outputs}), comparison with numerical results from literature.}
    \label{fig:res:line}
\end{figure}

The results show a very good agreement between the findings of the present study and previously reported temperature results,
along the different locations in the eyeball.

\section{Uncertainty quantification}
\label{sec:uq}

The \emph{uncertainty quantification} (UQ) allows quantifying the uncertainty of the model parameters on various outputs.
In the present work, we focus on forward UQ, that is, we want to quantify the uncertainty and the sensitivity of the output of the model, given the uncertainty of the input parameters.
More precisely, two studies are performed:
(i) an uncertainty propagation, to understand how the uncertainties of the inputs of the model are propagated to the output via the computational model, and
(ii) a \emph{sensitivity analysis} (SA) to assess the impact of varying selected parameters on several outputs of interest, namely temperature at specific locations in the eye.
Their locations are detailed in \Cref{fig:outputs}.

The SA is conducted in two different approaches.
First, to recapitulate findings from the literature, we performed a deterministic SA, where for each simulation,
only one parameter is allowed to vary in a given range, whereas the others are fixed to their baseline value.
In a second stage, we extended the SA to a stochastic framework, where each selected parameter follows a given random distribution
and the impact on the quantity of interest is assessed via sensitivity indices.
The advantage of the latter is the global perspective provided by this method and its ability to capture high-order interactions among several input parameters.


\def\ech{0.62}
\begin{figure}
    \centering
    \subfigure[Evaporation rate $E$]{
    \begin{tikzpicture}[scale=\ech]
    \begin{axis}
        [grid=major, xlabel = {$E$ [\unit{\watt.\meter^{-2}}]}, ylabel = {$T$ [\unit{\kelvin}]},
        yticklabel style={scaled ticks=false, /pgf/number format/fixed, /pgf/number format/precision=0},
        xticklabel style={scaled ticks=false, /pgf/number format/fixed, /pgf/number format/precision=0}]
        \addplot[colorFeel3, mark=x, line width=1pt] table[x=E, y=OK, col sep=comma] {fig/eye/results/deterministic-SA/E_feel_K.csv};
        \addplot[colorOoi, mark=square, line width=1pt] table[x=E, y=OK, col sep=comma] {fig/eye/results/deterministic-SA/E_ooi_K.csv};
        \addplot[colorScott, mark=+, line width=1pt] table[x=E, y=OK, col sep=comma] {fig/eye/results/deterministic-SA/E_scott_K.csv};
        \addplot[colorLi, mark=asterisk, line width=1pt] table[x=E, y=OK, col sep=comma] {fig/eye/results/deterministic-SA/E_li_K.csv};
    \draw[dashed, color=black!60, line width=2pt] (40, \pgfkeysvalueof{/pgfplots/ymin}) -- (40, \pgfkeysvalueof{/pgfplots/ymax});
    \end{axis}
    \end{tikzpicture}
    }
    \subfigure[Ambient convective coefficient $h_\text{amb}$]{
    \begin{tikzpicture}[scale=\ech]
    \begin{axis}
        [grid=major, xlabel = {$h_\text{amb}$ [\unit{\watt.\meter^{-2}.\kelvin^{-1}}]}, ylabel = {$T$ [\unit{\kelvin}]},
        yticklabel style={scaled ticks=false, /pgf/number format/fixed, /pgf/number format/precision=0},
        xticklabel style={scaled ticks=false, /pgf/number format/fixed, /pgf/number format/precision=0}]
        \addplot[colorFeel3, mark=x, line width=1pt] table[x=h_amb, y=OK, col sep=comma] {fig/eye/results/deterministic-SA/h_amb_feel_K.csv};
        \addplot[colorOoi, mark=square, line width=1pt] table[x=h_amb, y=OK, col sep=comma] {fig/eye/results/deterministic-SA/h_amb_ooi_K.csv};
        \addplot[colorScott, mark=+, line width=1pt] table[x=h_amb, y=OK, col sep=comma] {fig/eye/results/deterministic-SA/h_amb_scott_K.csv};
        \addplot[colorLi, mark=asterisk, line width=1pt] table[x=h_amb, y=OK, col sep=comma] {fig/eye/results/deterministic-SA/h_amb_li_K.csv};
    \draw[dashed, color=black!60, line width=2pt] (10, \pgfkeysvalueof{/pgfplots/ymin}) -- (10, \pgfkeysvalueof{/pgfplots/ymax});
    \end{axis}
    \end{tikzpicture}
    }
    \subfigure[Blood convection coefficient $h_\text{bl}$]{
    \begin{tikzpicture}[scale=\ech]
    \begin{axis}
        [grid=major, , xlabel = {$h_\text{bl}$ [\unit{\watt.\meter^{-2}.\kelvin^{-1}}]}, ylabel = {$T$ [\unit{\kelvin}]},
        yticklabel style={scaled ticks=false, /pgf/number format/fixed, /pgf/number format/precision=1},
        xticklabel style={scaled ticks=false, /pgf/number format/fixed, /pgf/number format/precision=0}]
        \addplot[colorFeel3, mark=x, line width=1pt] table[x=h_bl, y=OK, col sep=comma] {fig/eye/results/deterministic-SA/h_bl_feel_K.csv};
        \addplot[colorOoi, mark=square, line width=1pt] table[x=h_bl, y=OK, col sep=comma] {fig/eye/results/deterministic-SA/h_bl_ooi_K.csv};
        \addplot[colorScott, mark=+, line width=1pt] table[x=h_bl, y=OK, col sep=comma] {fig/eye/results/deterministic-SA/h_bl_scott_K.csv};
        \addplot[colorLi, mark=asterisk, line width=1pt] table[x=h_bl, y=OK, col sep=comma] {fig/eye/results/deterministic-SA/h_bl_li_K.csv};
    \draw[dashed, color=black!60, line width=2pt] (65, \pgfkeysvalueof{/pgfplots/ymin}) -- (65, \pgfkeysvalueof{/pgfplots/ymax});
    \end{axis}
    \end{tikzpicture}
    }
    \subfigure[Conductivity of the lens $k_\text{lens}$]{
    \begin{tikzpicture}[scale=\ech]
    \begin{axis}
        [grid=major, , xlabel = {$k_\text{lens}$ [\unit{\watt.\meter^{-1}.\kelvin^{-1}}]}, ylabel = {$T$ [\unit{\kelvin}]},
        yticklabel style={scaled ticks=false, /pgf/number format/fixed, /pgf/number format/precision=1},
        xticklabel style={scaled ticks=false, /pgf/number format/fixed, /pgf/number format/precision=1}]
        \addplot[colorFeel3, mark=x, line width=1pt] table[x=k_lens, y=OK, col sep=comma] {fig/eye/results/deterministic-SA/k_lens_feel_K.csv};
        \addplot[colorOoi, mark=square, line width=1pt] table[x=k_lens, y=OK, col sep=comma] {fig/eye/results/deterministic-SA/k_lens_ooi_K.csv};
        \addplot[colorScott, mark=+, line width=1pt] table[x=k_lens, y=OK, col sep=comma] {fig/eye/results/deterministic-SA/k_lens_scott_K.csv};
    \draw[dashed, color=black!60, line width=2pt] (0.4, \pgfkeysvalueof{/pgfplots/ymin}) -- (0.4, \pgfkeysvalueof{/pgfplots/ymax});
    \end{axis}
    \end{tikzpicture}
    }
    \subfigure[Ambient temperature $T_\text{amb}$]{
    \begin{tikzpicture}[scale=\ech]
    \begin{axis}
        [grid=major, , xlabel = {$T_\text{amb}$ [\unit{\degreeCelsius}]}, ylabel = {$T$ [\unit{\kelvin}]},
        yticklabel style={scaled ticks=false, /pgf/number format/fixed, /pgf/number format/precision=0},
        xticklabel style={scaled ticks=false, /pgf/number format/fixed, /pgf/number format/precision=0}]
        \addplot[colorFeel3, mark=x, line width=1pt] table[x=T_amb, y=OK, col sep=comma] {fig/eye/results/deterministic-SA/T_amb_feel_K.csv};
        \addplot[colorOoi, mark=square, line width=1pt] table[x=T_amb, y=OK, col sep=comma] {fig/eye/results/deterministic-SA/T_amb_ooi_K.csv};
        \addplot[colorScott, mark=+, line width=1pt] table[x=T_amb, y=OK, col sep=comma] {fig/eye/results/deterministic-SA/T_amb_scott_K.csv};
        \addplot[colorLi, mark=asterisk, line width=1pt] table[x=T_amb, y=OK, col sep=comma] {fig/eye/results/deterministic-SA/T_amb_li_K.csv};
    \draw[dashed, color=black!60, line width=2pt] (20, \pgfkeysvalueof{/pgfplots/ymin}) -- (20, \pgfkeysvalueof{/pgfplots/ymax});
    \end{axis}
    \end{tikzpicture}
    }
    \subfigure[Blood temperature $T_\text{bl}$]{
    \begin{tikzpicture}[scale=\ech]
    \begin{axis}
        [grid=major, xlabel = {$T_\text{bl}$ [\unit{\degreeCelsius}]}, ylabel = {$T$ [\unit{\kelvin}]},
        yticklabel style={scaled ticks=false, /pgf/number format/fixed, /pgf/number format/precision=0},
        xticklabel style={scaled ticks=false, /pgf/number format/fixed, /pgf/number format/precision=0}]
        \addplot[colorFeel3, mark=x, line width=1pt] table[x=T_bl, y=OK, col sep=comma] {fig/eye/results/deterministic-SA/T_bl_feel_K.csv};
        \addplot[colorOoi, mark=square, line width=1pt] table[x=T_bl, y=OK, col sep=comma] {fig/eye/results/deterministic-SA/T_bl_ooi_K.csv};
        \addplot[colorScott, mark=+, line width=1pt] table[x=T_bl, y=OK, col sep=comma] {fig/eye/results/deterministic-SA/T_bl_scott_K.csv};
        \addplot[colorLi, mark=asterisk, line width=1pt] table[x=T_bl, y=OK, col sep=comma] {fig/eye/results/deterministic-SA/T_bl_li_K.csv};
    \draw[thick, dashed, color=black!60, line width=2pt]  (37, \pgfkeysvalueof{/pgfplots/ymin}) -- (37, \pgfkeysvalueof{/pgfplots/ymax});
    \end{axis}
    \end{tikzpicture}
    }

    \caption{Results of the DSA for the 6 parameters studied, among previous studies from the literature
        (markers \textcolor{colorFeel3}{\textsf{x} $\Em_\text{NL}(\mu)$}, \textcolor{colorOoi}{$\square$ \cite{NG2006268}}, \textcolor{colorScott}{\textsf{+} \cite{Scott_1988}}, \textcolor{colorLi}{$\ast$ \cite{li2010}}).
        The vertical dashed line corresponds to the baseline value of the parameter.}
    \label{fig:det-sa}
\end{figure}
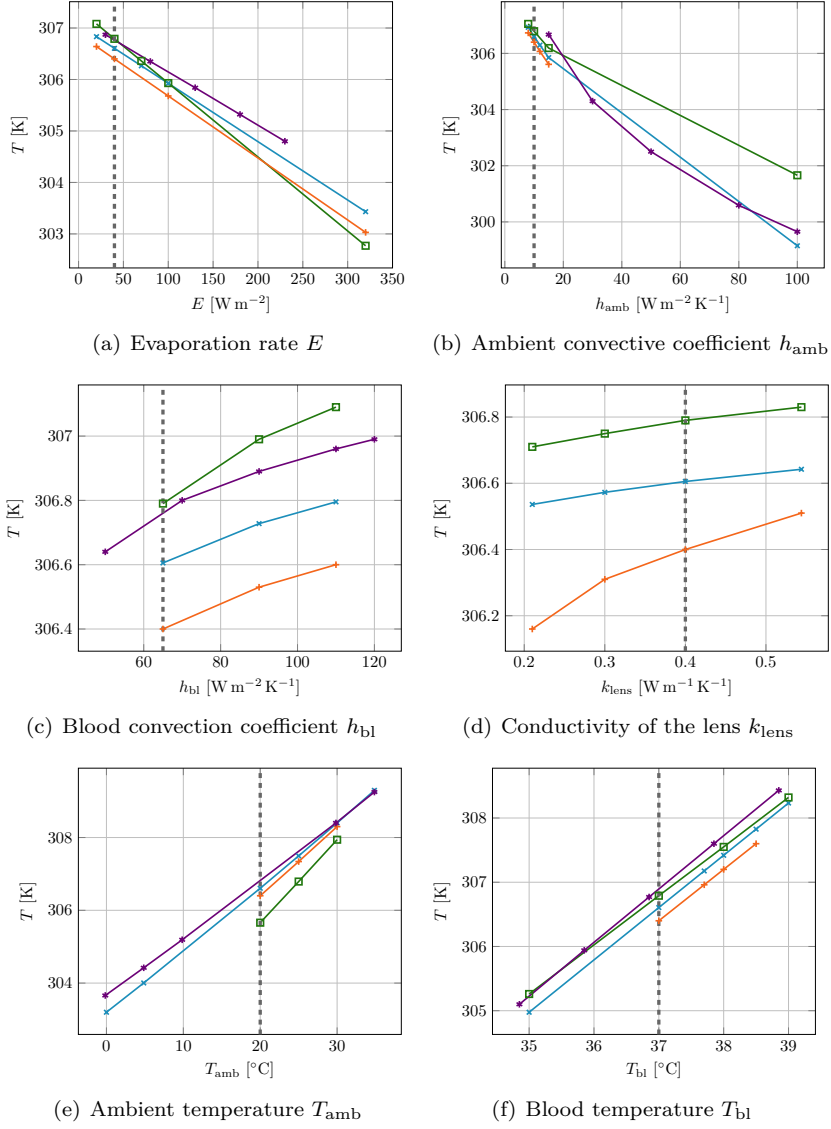


\subsection{Deterministic sensitivity analysis}
\label{sec:DSA}

Our initial investigation of the impact of varying selected parameters is conducted through a \emph{deterministic sensitivity analysis} (DSA).
Specifically, we choose a parameter among the ones defined in \Cref{tab:parameters}, and we set the other to their baseline value.
Next, we vary the selected parameter among pre-defined values and compute the outputs of the high fidelity model.
Similar studies were performed in \cite{Scott_1988, NG2006268, li2010}.
We gather in the present study information about several parameters of interest for the heat transfer model from these studies,
namely baseline values and ranges.
These variations correspond not only to physiological conditions but also include some extreme situations.
We postpone a more in-depth discussion on this topic to \Cref{sec:SSA}, where the random distributions characterizing these parameters are set up.
Note that in this case, we do not need to use the reduced model, since only a relatively small number of simulations is required.

In \Cref{fig:det-sa}, we show the results of the DSA for the parameter $\mu=\{E, h_\text{amb}, h_\text{bl}, k_\text{lens}, T_\text{amb}, T_\text{bl}\}$, on point $O$ which is at the surface of the cornea.
The plain-line curves correspond to the results reported in the literature, which we compare with the results of our simulations;
the vertical dashed line corresponds to the baseline value for each parameter.
The results are in very good agreement with previous findings and show that temperature at the level of the cornea is strongly influenced by  $h_\text{amb}$, $T_\text{amb}$, $E$, and $T_\text{bl}$,
whereas the influence of $h_\text{bl}$ and $k_\text{lens}$ is less significant.
For instance, high air conductivity can result in a temperature \qty{7}{\kelvin} lower than the baseline value,
while the difference obtained for $h_\text{bl}$ and $k_\text{lens}$ in the computed temperature is at most of \qty{1}{\kelvin}.

\subsection[Stochastic sensitivity analysis]{Stochastic sensitivity analysis (SSA)}
\label{sec:SSA}

We consider an output quantity $Y$ depending on a set of input parameters $\mu\in\Dmu$,
and we estimate the sensitivity of $Y$ to each parameter $\mu_i$ for $i\in\llbracket 1,d\rrbracket$, where $d$ is the dimension of the parametric space.
To this end, we compute the \emph{Sobol' sensitivity indices} introduced in \cite{Sobol1993SensitivityEF} as follows.
We assume that each component $\mu_i$ of $\mu$ follows a random variable $X_i$, independent of the others.
The first-order indices are defined as:

\begin{equation}
    S_i := \frac{\var\left(\E\left[Y\middle|X_i\right]\right)}{\var(Y)}
\end{equation}
where $\var(Y)$ corresponds to the variance of $Y$ including the eventual non-linearity effect of the coefficient on the output,
and $\var\left(\E\left[Y\middle|X_i\right]\right)$ is the variance of the conditional expectation of $Y$ given $X_i$, corresponding to the first order effect of the parameter $\mu_i$ on the output:
if the parameter modeled by the distribution $X_j$ has a great impact on the output $Y$, then $\E\left[Y\middle|X_j\right]$ will vary has well, and so its variance.

We also define the \emph{total Sobol' index}:

\begin{equation}
    S_i^\text{tot} := \frac{\var\left(\E\left[Y\middle|X_{(-i)}\right]\right)}{\var(Y)} = 1 - S_{-i}
\end{equation}
where $X_{(-i)} = (X_1,\cdots,X_{i-1},X_{i+1},\cdots,X_d)$ is the set of parameters without the parameter $X_i$, and $S_{-i}$ is the sum of the indices where $X_i$ is not present.


To compute the Sobol' indices, we use an algorithm of functional chaos, implemented in the library OpenTURNS \cite{Baudin2016} by the class \texttt{FunctionalChaosAlgorithm}, using a bootstrap method%
\footnote{See documentation \url{https://openturns.github.io/openturns/latest/auto_meta_modeling/polynomial_chaos_metamodel/plot_chaos_sobol_confidence.html}}
for the confidence intervals.

\subsubsection{Choice of the distributions}

We discuss now the prior distributions for each parameter.
Each parameter does not depend on the others, resulting in a family of 6 random independent variables.
\Cref{fig:eye:distributions} shows the probability density function (PDF) of distributions of the parameters,
associated with the baseline values (see \Cref{tab:parameters}),
where the parameters used in the literature for the deterministic sensitivity analysis are represented with a vertical line.
We present hereafter some details on how the random distributions were constructed.

\begin{itemize}
    \item \textbf{Evaporation rate $E$:} According to \cite{Scott_1988}, the evaporation rate's range is of 40 to 100 \unit{\watt.\meter^{-2}}, using data from literature \cite{adler53}.
    The value $E=\qty{40}{\watt.\meter^{-2}}$ is chosen as the baseline value.
    Some high values are also considered, to study the impact of important evaporation rates.
    The values used in the literature run from 20 to \qty{320}{\watt.\meter^{-2}}.
    As this parameter varies by several orders of magnitude, we decided to use a log-normal distribution to represent it.
    More precisely we set $E\sim \logN(\mu_E, \sigma_E, \gamma_E)$, with $\sigma_E = 0.7$, $\mu_E=\log(40)-\frac{0.15^2}{2}$ and $\gamma_E=20$, restricted to $[20, 130]$.
    The distribution is presented in \Cref{fig:eye:distributions:E}.
    This choice of the distribution leads to a mean value of $\overline{E}=\qty{55.8}{\watt.\meter^{-2}}$.

    \item \textbf{Ambient air convection coefficient $h_\text{\normalfont amb}$:}
    In \cite{Scott_1988}, the sole value given for the ambient air convection coefficient is \qty{10}{\watt.\meter^{-2}.\kelvin^{-1}}, and similar values are used to run the DSA, from 8 to 15~\unit{\watt.\meter^{-2}.\kelvin^{-1}}.
    Other results in the literature coroborate this value: \cite[Table 12.2]{KOSKY2013259} reports a range of 2.5 to 25~\unit{\watt.\meter^{-2}.\kelvin^{-1}} for a free convection, and 10 to \qty{500}{\watt.\meter^{-2}.\kelvin^{-1}} for a forced convection.
    \cite{engeneer-edge} proposes a range of 10 to \qty{100}{\watt.\meter^{-2}.\kelvin^{-1}} for the air.
    In their DSA, \cite{NG2006268} and \cite{li2010} use higher values of $h_\text{amb}$, up to \qty{100}{\watt.\meter^{-2}.\kelvin^{-1}} to simulate a forced convection condition.
    As high values are not a common case, such a coefficient should not have a high frequency in the distribution.
    We chose to use a log-normal distribution: $h_\text{amb}\sim \logN\left(\log(10)-\frac{1}{2}, 1, 8\right)$.
    In Remark \ref{rem:linearization}, we discussed the linearization process of the model, inducing the usage of a fixed parameter $h_\text{r}$ chosen to fit temperature in usual ambient room conditions,
    which leads to a restriction of the distribution to the interval $[8, 100]~\unit{\watt.\meter^{-2}.\kelvin^{-1}}$.
    The distribution is presented in \Cref{fig:eye:distributions:hamb}.
    The mean value of the distribution is $\overline{h_\text{amb}}=\qty{17.6}{\watt.\meter^{-2}.\kelvin^{-1}}$.

    \item \textbf{Blood convection coefficient $h_\text{\normalfont bl}$:} A control value of \qty{65}{\watt.\meter^{-2}.\kelvin^{-1}}, derived from experimental data \cite{Lagendijk_1982} is provided in \cite{Scott_1988}.
    For the DSA, the values used run from 50 to \qty{120}{\watt.\meter^{-2}.\kelvin^{-1}}.
    This leads us to the following assumption for the distribution of the parameter: $h_\text{bl}\sim \logN\left(\log(65)-\frac{0.15^2}{2}, 0.15, 0\right)$, restricted over $[50, 120]$, see \Cref{fig:eye:distributions:hbl}.
    The mean of this distribution is $\overline{h_\text{bl}}=\qty{65.8}{\watt.\meter^{-2}.\kelvin^{-1}}$.

    \item \textbf{Lens conductivity $k_\text{\normalfont lens}$:} This parameter is chosen among all the conductivities since the water content of the lens varies with aging \cite{Scott_1988}.
    \cite{Scott_1988} and \cite{NG2006268} run the DSA with this parameter, using values from 0.21 to \qty{0.544}{\watt\metre^{-1}\kelvin^{-1}}.
    As the range of values is not very large, it seems reasonable to use a uniform distribution for this parameter: $k_\text{lens}\sim \unif(0.21, 0.544)$, see \Cref{fig:eye:distributions:klens}.

    \item \textbf{Ambient temperature $T_\text{\normalfont amb}$:} The baseline value of this parameter is taken to a usual room temperature of \qty{294}{\kelvin} (\qty{20}{\celsius}).
    The values used for the DSA vary from extreme conditions of \qty{273}{\kelvin} (\qty{0}{\celsius}) to \qty{308}{\kelvin} (\qty{35}{\celsius}).
    As these extreme values are not very common, we choose to restrict the values taken by $T_\text{amb}$ from \qty{283.15}{\kelvin} (\qty{-35}{\celsius}) to \qty{303.15}{\kelvin} (\qty{30}{\celsius}): $T_\text{amb}\sim \unif(283.15, 303.15)$.
    The distribution is presented in \Cref{fig:eye:distributions:Tamb}.

    \item \textbf{Blood temperature $T_\text{\normalfont bl}$:} The temperature of human blood is commonly accepted to be \qty{310}{\kelvin} (\qty{37}{\celsius}).
    For the DSA, cases of hypothermia and hyperthermia are considered, with a range from \qtyrange{308}{312.15}{\kelvin} (\qtyrange{35}{39}{\celsius}).
    We therefore take $T_\text{bl}\sim \unif(308, 312.15)$, see \Cref{fig:eye:distributions:Tbl}.
\end{itemize}

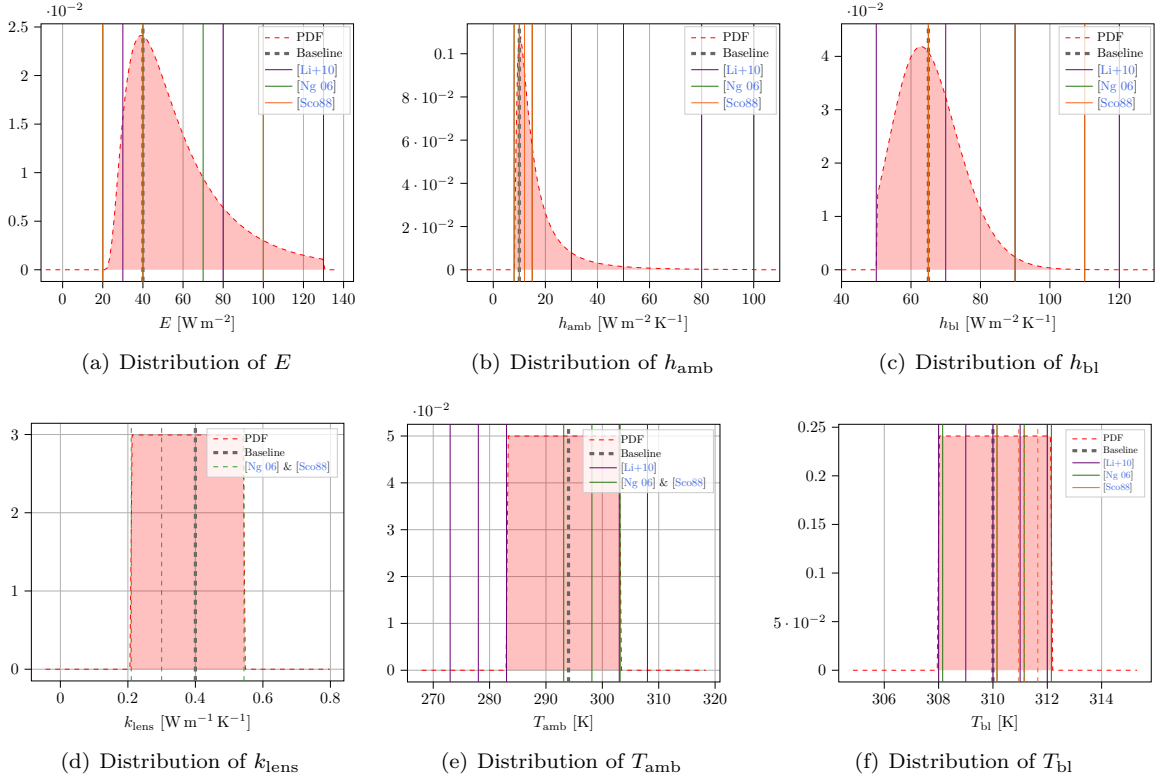
\begin{figure}
    \centering
    \def\chl{0.6}
    \subfigure[Distribution of $E$\label{fig:eye:distributions:E}]{
\begin{tikzpicture}[scale=\chl]

\definecolor{darkgray176}{RGB}{176,176,176}
\definecolor{darkorange25512714}{RGB}{255,127,14}
\definecolor{forestgreen4416044}{RGB}{44,160,44}
\definecolor{lightgray204}{RGB}{204,204,204}
\definecolor{steelblue31119180}{RGB}{31,119,180}

\begin{axis}[
legend cell align={left},
legend style={
  fill opacity=0.8,
  draw opacity=1,
  text opacity=1,
  at={(0.99,0.99)},
  anchor=north east,
  draw=lightgray204,
  nodes={scale=0.8, transform shape}
},
tick align=outside,
tick pos=left,
x grid style={darkgray176},
xlabel={$E$ [\unit{\watt.\meter^{-2}}]},
xmajorgrids,
xmin=-10.7699338462917, xmax=144.959770628964,
xtick style={color=black},
y grid style={darkgray176},
ymin=-0.00120604026816441, ymax=0.0253268456314527,
ytick style={color=black}
]
\addplot [dashed, red, fill=red, fill opacity=0.25]
table {%
-22.7822200065073 0
-21.534139719041 0
-20.2860594315747 0
-19.0379791441084 0
-17.7898988566421 0
-16.5418185691758 0
-15.2937382817095 0
-14.0456579942432 0
-12.7975777067768 0
-11.5494974193105 0
-10.3014171318442 0
-9.05333684437793 0
-7.80525655691162 0
-6.55717626944531 0
-5.30909598197901 0
-4.0610156945127 0
-2.8129354070464 0
-1.56485511958009 0
-0.316774832113783 0
0.931305455352522 0
2.17938574281883 0
3.42746603028514 0
4.67554631775144 0
5.92362660521775 0
7.17170689268405 0
8.41978718015036 0
9.66786746761667 0
10.915947755083 0
12.1640280425493 0
13.4121083300156 0
14.6601886174819 0
15.9082689049482 0
17.1563491924145 0
18.4044294798808 0
19.6525097673471 0
20.9005900548134 1.72547435768853e-06
22.1486703422797 0.000181582907477345
23.396750629746 0.00113414561660361
24.6448309172123 0.00310025379771601
25.8929112046786 0.00582684483753852
27.140991492145 0.00891274264757005
28.3890717796113 0.012010746463036
29.6371520670776 0.0148833462960416
30.8852323545439 0.0173946663405769
32.1333126420102 0.019484687905207
33.3813929294765 0.0211438273972586
34.6294732169428 0.0223930668785405
35.8775535044091 0.0232699726949402
37.1256337918754 0.0238194826100309
38.3737140793417 0.024088157572697
39.621794366808 0.0241208053632883
40.8698746542743 0.023958663398171
42.1179549417406 0.023638571637223
43.3660352292069 0.0231927515183586
44.6141155166732 0.0226489384553381
45.8621958041395 0.0220307056935559
47.1102760916058 0.0213578777587736
48.3583563790722 0.0206469714956011
49.6064366665385 0.0199116284495652
50.8545169540048 0.0191630187683871
52.1025972414711 0.018410207071761
53.3506775289374 0.0176604770060341
54.5987578164037 0.0169196148843155
55.84683810387 0.0161921548523157
57.0949183913363 0.0154815890300633
58.3429986788026 0.0147905464588627
59.5910789662689 0.0141209446887326
60.8391592537352 0.0134741176381079
62.0872395412015 0.0128509230478454
63.3353198286678 0.0122518324989238
64.5834001161341 0.0116770066055136
65.8314804036004 0.0111263576537601
67.0795606910667 0.0105996016428447
68.3276409785331 0.0100963014034393
69.5757212659994 0.00961590222049305
70.8238015534657 0.00915776117113344
72.071881840932 0.00872117120189821
73.3199621283983 0.00830538080958581
74.5680424158646 0.00790961005363437
75.8161227033309 0.00753306351210646
77.0642029907972 0.00717494069528157
78.3122832782635 0.00683444434801822
79.5603635657298 0.00651078700220824
80.8084438531961 0.00620319608185311
82.0565241406624 0.00591091781385667
83.3046044281287 0.00563322015609863
84.552684715595 0.00536939491948942
85.8007650030613 0.00511875923145685
87.0488452905277 0.00488065646378603
88.296925577994 0.00465445672717269
89.5450058654603 0.00443955701762145
90.7930861529266 0.00423538108538846
92.0411664403929 0.00404137908508453
93.2892467278592 0.00385702705544022
94.5373270153255 0.00368182626877279
95.7854073027918 0.00351530248311904
97.0334875902581 0.00335700512408507
98.2815678777244 0.00320650641852594
99.5296481651907 0.00306340049804798
100.777728452657 0.00292730248689243
102.025808740123 0.00279784758589962
103.27388902759 0.00267469016187615
104.521969315056 0.00255750284971448
105.770049602522 0.00244597567298067
107.018129889989 0.00233981518733538
108.266210177455 0.00223874365004065
109.514290464921 0.00214249821789144
110.762370752387 0.00205083017516283
112.010451039854 0.00196350419255592
113.25853132732 0.00188029761763203
114.506611614786 0.00180099979682978
115.754691902253 0.00172541142884338
117.002772189719 0.0016533439488925
118.250852477185 0.00158461894322068
119.498932764652 0.00151906759301218
120.747013052118 0.0014565301468076
121.995093339584 0.00139685542042074
123.243173627051 0.00133990032330569
124.491253914517 0.00128552941029182
125.739334201983 0.00123361445758846
126.987414489449 0.00118403406195963
128.235494776916 0.00113667326197789
129.483575064382 0.00109142318028403
130.731655351848 0
131.979735639315 0
133.227815926781 0
134.475896214247 0
135.723976501714 0
136.97205678918 0
};
\addlegendentry{PDF}
\addplot [dashed, color=black!60, line width=2pt]
table {%
40 -0.00120604026816441
40 0.0253268456314527
};
\addlegendentry{Baseline}
\addplot [semithick, colorLi]
table {%
30 -0.00120604026816441
30 0.0253268456314527
};
\addlegendentry{\cite{li2010}}
\addplot [semithick, colorLi, forget plot]
table {%
80 -0.00120604026816441
80 0.0253268456314527
};
\addplot [semithick, colorLi, forget plot]
table {%
130 -0.00120604026816441
130 0.0253268456314527
};
\addplot [semithick, colorLi, forget plot]
table {%
180 -0.00120604026816441
180 0.0253268456314527
};
\addplot [semithick, colorLi, forget plot]
table {%
230 -0.00120604026816441
230 0.0253268456314527
};
\addplot [semithick, colorOoi]
table {%
20 -0.00120604026816441
20 0.0253268456314527
};
\addlegendentry{\cite{NG2006268}}
\addplot [semithick, colorOoi, forget plot]
table {%
40 -0.00120604026816441
40 0.0253268456314527
};
\addplot [semithick, colorOoi, forget plot]
table {%
70 -0.00120604026816441
70 0.0253268456314527
};
\addplot [semithick, colorOoi, forget plot]
table {%
100 -0.00120604026816441
100 0.0253268456314527
};
\addplot [semithick, colorOoi, forget plot]
table {%
320 -0.00120604026816441
320 0.0253268456314527
};
\addplot [semithick, colorScott]
table {%
20 -0.00120604026816441
20 0.0253268456314527
};
\addlegendentry{\cite{Scott_1988}}
\addplot [semithick, colorScott, forget plot]
table {%
40 -0.00120604026816441
40 0.0253268456314527
};
\addplot [semithick, colorScott, forget plot]
table {%
100 -0.00120604026816441
100 0.0253268456314527
};
\addplot [semithick, colorScott, forget plot]
table {%
320 -0.00120604026816441
320 0.0253268456314527
};
\end{axis}

\end{tikzpicture}
    }
    \subfigure[Distribution of $h_\text{amb}$\label{fig:eye:distributions:hamb}]{
\begin{tikzpicture}[scale=\chl]

\definecolor{darkgray176}{RGB}{176,176,176}
\definecolor{darkorange25512714}{RGB}{255,127,14}
\definecolor{forestgreen4416044}{RGB}{44,160,44}
\definecolor{lightgray204}{RGB}{204,204,204}
\definecolor{steelblue31119180}{RGB}{31,119,180}

\begin{axis}[
legend cell align={left},
legend style={
  fill opacity=0.8,
  draw opacity=1,
  text opacity=1,
  at={(0.99,0.99)},
  anchor=north east,
  draw=lightgray204,
  nodes={scale=0.8, transform shape}
},
tick align=outside,
tick pos=left,
x grid style={darkgray176},
xlabel={$h_\text{amb}$ [\unit{\watt.\meter^{-2}.\kelvin^{-1}}]},
xmajorgrids,
xmin=-10, xmax=110,
xtick style={color=black},
y grid style={darkgray176},
ymin=-0.00542153685454793, ymax=0.113852273945507,
ytick style={color=black}
]
\addplot [dashed, red, fill=red, fill opacity=0.25]
table[x=x, y=pdf, col sep=comma] {fig/eye/distributions/h_amb_pdf.csv};
\addlegendentry{PDF}
\addplot [dashed, color=black!60, line width=2pt]
table {%
9.99999999999999 -0.00542153685454794
9.99999999999999 0.113852273945507
};
\addlegendentry{Baseline}
\addplot [semithick, colorLi]
table {%
15 -0.00542153685454794
15 0.113852273945507
};
\addlegendentry{\cite{li2010}}
\addplot [semithick, colorLi, forget plot]
table {%
30 -0.00542153685454794
30 0.113852273945507
};
\addplot [semithick, colorLi, forget plot]
table {%
50 -0.00542153685454794
50 0.113852273945507
};
\addplot [semithick, colorLi, forget plot]
table {%
80 -0.00542153685454794
80 0.113852273945507
};
\addplot [semithick, colorLi, forget plot]
table {%
100 -0.00542153685454794
100 0.113852273945507
};
\addplot [semithick, colorOoi]
table {%
8 -0.00542153685454794
8 0.113852273945507
};
\addlegendentry{\cite{NG2006268}}
\addplot [semithick, colorOoi, forget plot]
table {%
9.99999999999999 -0.00542153685454794
9.99999999999999 0.113852273945507
};
\addplot [semithick, colorOoi, forget plot]
table {%
15 -0.00542153685454794
15 0.113852273945507
};
\addplot [semithick, colorOoi, forget plot]
table {%
100 -0.00542153685454794
100 0.113852273945507
};
\addplot [semithick, colorScott]
table {%
8 -0.00542153685454794
8 0.113852273945507
};
\addlegendentry{\cite{Scott_1988}}
\addplot [semithick, colorScott, forget plot]
table {%
9.99999999999999 -0.00542153685454794
9.99999999999999 0.113852273945507
};
\addplot [semithick, colorScott, forget plot]
table {%
12 -0.00542153685454794
12 0.113852273945507
};
\addplot [semithick, colorScott, forget plot]
table {%
15 -0.00542153685454794
15 0.113852273945507
};
\end{axis}

\end{tikzpicture}
    }
    \subfigure[Distribution of $h_\text{bl}$\label{fig:eye:distributions:hbl}]{
\begin{tikzpicture}[scale=\chl]

\definecolor{darkgray176}{RGB}{176,176,176}
\definecolor{darkorange25512714}{RGB}{255,127,14}
\definecolor{forestgreen4416044}{RGB}{44,160,44}
\definecolor{lightgray204}{RGB}{204,204,204}
\definecolor{steelblue31119180}{RGB}{31,119,180}

\begin{axis}[
legend cell align={left},
legend style={
  fill opacity=0.8,
  draw opacity=1,
  text opacity=1,
  at={(0.99,0.99)},
  anchor=north east,
  draw=lightgray204,
  nodes={scale=0.8, transform shape}
},
tick align=outside,
tick pos=left,
x grid style={darkgray176},
xlabel={$h_\text{bl}$ [\unit{\watt.\meter^{-2}.\kelvin^{-1}}]},
xmajorgrids,
xmin=40, xmax=130,
xtick style={color=black},
ymin=-0.00219537964264221, ymax=0.0461029724954863,
ytick style={color=black}
]
\addplot [dashed, red, fill=red, fill opacity=0.25]
table[x=x, y=pdf, col sep=comma] {fig/eye/distributions/h_bl_pdf.csv};
\addlegendentry{PDF}
\addplot [dashed, color=black!60, line width=2pt]
table {%
65 -0.00219537964264221
65 0.0461029724954863
};
\addlegendentry{Baseline}
\addplot [semithick, colorLi]
table {%
50 -0.00219537964264221
50 0.0461029724954863
};
\addlegendentry{\cite{li2010}}
\addplot [semithick, colorLi, forget plot]
table {%
70 -0.00219537964264221
70 0.0461029724954863
};
\addplot [semithick, colorLi, forget plot]
table {%
90 -0.00219537964264221
90 0.0461029724954863
};
\addplot [semithick, colorLi, forget plot]
table {%
110 -0.00219537964264221
110 0.0461029724954863
};
\addplot [semithick, colorLi, forget plot]
table {%
120 -0.00219537964264221
120 0.0461029724954863
};
\addplot [semithick, colorOoi]
table {%
65 -0.00219537964264221
65 0.0461029724954863
};
\addlegendentry{\cite{NG2006268}}
\addplot [semithick, colorOoi, forget plot]
table {%
90 -0.00219537964264221
90 0.0461029724954863
};
\addplot [semithick, colorOoi, forget plot]
table {%
110 -0.00219537964264221
110 0.0461029724954863
};
\addplot [semithick, colorScott]
table {%
65 -0.00219537964264221
65 0.0461029724954863
};
\addlegendentry{\cite{Scott_1988}}
\addplot [semithick, colorScott, forget plot]
table {%
90 -0.00219537964264221
90 0.0461029724954863
};
\addplot [semithick, colorScott, forget plot]
table {%
110 -0.00219537964264221
110 0.0461029724954863
};
\end{axis}

\end{tikzpicture}
    }
    \subfigure[Distribution of $k_\text{lens}$\label{fig:eye:distributions:klens}]{
\begin{tikzpicture}[scale=\chl]

\definecolor{darkgray176}{RGB}{176,176,176}
\definecolor{forestgreen4416044}{RGB}{44,160,44}
\definecolor{lightgray204}{RGB}{204,204,204}

\begin{axis}[
legend cell align={left},
legend style={fill opacity=0.8, draw opacity=1, text opacity=1, draw=lightgray204, nodes={scale=0.7, transform shape}},
tick align=outside,
tick pos=left,
x grid style={darkgray176},
xlabel={$k_\text{lens}$ [\unit{\watt.\meter^{-1}.\kelvin^{-1}}]},
xmajorgrids,
xmin=-0.0859240000000001, xmax=0.839924,
xtick style={color=black},
y grid style={darkgray176},
ymajorgrids,
ymin=-0.149700598802395, ymax=3.1437125748503,
ytick style={color=black}
]
\addplot [dashed, red, fill=red, fill opacity=0.25]
table {%
-0.04384 0
-0.037264375 0
-0.03068875 0
-0.024113125 0
-0.0175375 0
-0.010961875 0
-0.00438625000000004 0
0.00218937499999997 0
0.00876499999999997 0
0.015340625 0
0.02191625 0
0.028491875 0
0.0350675 0
0.041643125 0
0.04821875 0
0.054794375 0
0.06137 0
0.067945625 0
0.07452125 0
0.081096875 0
0.0876725 0
0.094248125 0
0.10082375 0
0.107399375 0
0.113975 0
0.120550625 0
0.12712625 0
0.133701875 0
0.1402775 0
0.146853125 0
0.15342875 0
0.160004375 0
0.16658 0
0.173155625 0
0.17973125 0
0.186306875 0
0.1928825 0
0.199458125 0
0.20603375 0
0.212609375 2.9940119760479
0.219185 2.9940119760479
0.225760625 2.9940119760479
0.23233625 2.9940119760479
0.238911875 2.9940119760479
0.2454875 2.9940119760479
0.252063125 2.9940119760479
0.25863875 2.9940119760479
0.265214375 2.9940119760479
0.27179 2.9940119760479
0.278365625 2.9940119760479
0.28494125 2.9940119760479
0.291516875 2.9940119760479
0.2980925 2.9940119760479
0.304668125 2.9940119760479
0.31124375 2.9940119760479
0.317819375 2.9940119760479
0.324395 2.9940119760479
0.330970625 2.9940119760479
0.33754625 2.9940119760479
0.344121875 2.9940119760479
0.3506975 2.9940119760479
0.357273125 2.9940119760479
0.36384875 2.9940119760479
0.370424375 2.9940119760479
0.377 2.9940119760479
0.383575625 2.9940119760479
0.39015125 2.9940119760479
0.396726875 2.9940119760479
0.4033025 2.9940119760479
0.409878125 2.9940119760479
0.41645375 2.9940119760479
0.423029375 2.9940119760479
0.429605 2.9940119760479
0.436180625 2.9940119760479
0.44275625 2.9940119760479
0.449331875 2.9940119760479
0.4559075 2.9940119760479
0.462483125 2.9940119760479
0.46905875 2.9940119760479
0.475634375 2.9940119760479
0.48221 2.9940119760479
0.488785625 2.9940119760479
0.49536125 2.9940119760479
0.501936875 2.9940119760479
0.5085125 2.9940119760479
0.515088125 2.9940119760479
0.52166375 2.9940119760479
0.528239375 2.9940119760479
0.534815 2.9940119760479
0.541390625 2.9940119760479
0.54796625 0
0.554541875 0
0.5611175 0
0.567693125 0
0.57426875 0
0.580844375 0
0.58742 0
0.593995625 0
0.60057125 0
0.607146875 0
0.6137225 0
0.620298125 0
0.62687375 0
0.633449375 0
0.640025 0
0.646600625 0
0.65317625 0
0.659751875 0
0.6663275 0
0.672903125 0
0.67947875 0
0.686054375 0
0.69263 0
0.699205625 0
0.70578125 0
0.712356875 0
0.7189325 0
0.725508125 0
0.73208375 0
0.738659375 0
0.745235 0
0.751810625 0
0.75838625 0
0.764961875 0
0.7715375 0
0.778113125 0
0.78468875 0
0.791264375 0
0.79784 0
};
\addlegendentry{PDF}
\addplot [dashed, color=black!60, line width=2pt]
table {%
0.4 -0.149700598802395
0.4 3.1437125748503
};
\addlegendentry{Baseline}
\addplot [semithick, forestgreen4416044, dashed]
table {%
0.21 -0.149700598802395
0.21 3.1437125748503
};
\addlegendentry{\cite{NG2006268} \& \cite{Scott_1988}}
\addplot [semithick, forestgreen4416044, dashed, forget plot]
table {%
0.3 -0.149700598802395
0.3 3.1437125748503
};
\addplot [semithick, forestgreen4416044, dashed, forget plot]
table {%
0.4 -0.149700598802395
0.4 3.1437125748503
};
\addplot [semithick, forestgreen4416044, dashed, forget plot]
table {%
0.544 -0.149700598802395
0.544 3.1437125748503
};
\end{axis}

\end{tikzpicture}
    }
    \subfigure[Distribution of $T_\text{amb}$\label{fig:eye:distributions:Tamb}]{
\begin{tikzpicture}[scale=\chl]

\definecolor{darkgray176}{RGB}{176,176,176}
\definecolor{darkorange25512714}{RGB}{255,127,14}
\definecolor{forestgreen4416044}{RGB}{44,160,44}
\definecolor{lightgray204}{RGB}{204,204,204}
\definecolor{steelblue31119180}{RGB}{31,119,180}

\begin{axis}[
legend cell align={left},
legend style={fill opacity=0.8, draw opacity=1, text opacity=1, draw=lightgray204, nodes={scale=0.7, transform shape}},
tick align=outside,
tick pos=left,
x grid style={darkgray176},
xlabel={$T_\text{amb}$ [\unit{\kelvin}]},
xmajorgrids,
xmin=265.43, xmax=320.87,
xtick style={color=black},
y grid style={darkgray176},
ymajorgrids,
ymin=-0.0025, ymax=0.0525,
ytick style={color=black}
]
\addplot [dashed, red, fill=red, fill opacity=0.25]
table {%
267.95 0
268.34375 0
268.7375 0
269.13125 0
269.525 0
269.91875 0
270.3125 0
270.70625 0
271.1 0
271.49375 0
271.8875 0
272.28125 0
272.675 0
273.06875 0
273.4625 0
273.85625 0
274.25 0
274.64375 0
275.0375 0
275.43125 0
275.825 0
276.21875 0
276.6125 0
277.00625 0
277.4 0
277.79375 0
278.1875 0
278.58125 0
278.975 0
279.36875 0
279.7625 0
280.15625 0
280.55 0
280.94375 0
281.3375 0
281.73125 0
282.125 0
282.51875 0
282.9125 0
283.30625 0.05
283.7 0.05
284.09375 0.05
284.4875 0.05
284.88125 0.05
285.275 0.05
285.66875 0.05
286.0625 0.05
286.45625 0.05
286.85 0.05
287.24375 0.05
287.6375 0.05
288.03125 0.05
288.425 0.05
288.81875 0.05
289.2125 0.05
289.60625 0.05
290 0.05
290.39375 0.05
290.7875 0.05
291.18125 0.05
291.575 0.05
291.96875 0.05
292.3625 0.05
292.75625 0.05
293.15 0.05
293.54375 0.05
293.9375 0.05
294.33125 0.05
294.725 0.05
295.11875 0.05
295.5125 0.05
295.90625 0.05
296.3 0.05
296.69375 0.05
297.0875 0.05
297.48125 0.05
297.875 0.05
298.26875 0.05
298.6625 0.05
299.05625 0.05
299.45 0.05
299.84375 0.05
300.2375 0.05
300.63125 0.05
301.025 0.05
301.41875 0.05
301.8125 0.05
302.20625 0.05
302.6 0.05
302.99375 0.05
303.3875 0
303.78125 0
304.175 0
304.56875 0
304.9625 0
305.35625 0
305.75 0
306.14375 0
306.5375 0
306.93125 0
307.325 0
307.71875 0
308.1125 0
308.50625 0
308.9 0
309.29375 0
309.6875 0
310.08125 0
310.475 0
310.86875 0
311.2625 0
311.65625 0
312.05 0
312.44375 0
312.8375 0
313.23125 0
313.625 0
314.01875 0
314.4125 0
314.80625 0
315.2 0
315.59375 0
315.9875 0
316.38125 0
316.775 0
317.16875 0
317.5625 0
317.95625 0
318.35 0
};
\addlegendentry{PDF}
\addplot [dashed, color=black!60, line width=2pt]
table {%
294 -0.0025
294 0.0525
};
\addlegendentry{Baseline}
\addplot [semithick, colorLi]
table {%
273 -0.0025
273 0.0525
};
\addlegendentry{\cite{li2010}}
\addplot [semithick, colorLi, forget plot]
table {%
278 -0.0025
278 0.0525
};
\addplot [semithick, colorLi, forget plot]
table {%
283 -0.0025
283 0.0525
};
\addplot [semithick, colorLi, forget plot]
table {%
303 -0.0025
303 0.0525
};
\addplot [semithick, colorLi, forget plot]
table {%
308 -0.0025
308 0.0525
};
\addplot [semithick, colorOoi]
table {%
293.15 -0.0025
293.15 0.0525
};
\addlegendentry{\cite{NG2006268} \& \cite{Scott_1988}}
\addplot [semithick, colorOoi, forget plot]
table {%
298.15 -0.0025
298.15 0.0525
};
\addplot [semithick, colorOoi, forget plot]
table {%
303.15 -0.0025
303.15 0.0525
};
\end{axis}

\end{tikzpicture}
    }
    \subfigure[Distribution of $T_\text{bl}$\label{fig:eye:distributions:Tbl}]{
\begin{tikzpicture}[scale=\chl]

\definecolor{darkgray176}{RGB}{176,176,176}
\definecolor{darkorange25512714}{RGB}{255,127,14}
\definecolor{forestgreen4416044}{RGB}{44,160,44}
\definecolor{lightgray204}{RGB}{204,204,204}
\definecolor{steelblue31119180}{RGB}{31,119,180}

\begin{axis}[
legend cell align={left},
legend style={fill opacity=0.8, draw opacity=1, text opacity=1, draw=lightgray204, nodes={scale=0.6, transform shape}},
tick align=outside,
tick pos=left,
x grid style={darkgray176},
xlabel={$T_\text{bl}$ [\unit{\kelvin}]},
xmajorgrids,
xmin=304.3231, xmax=315.8269,
xtick style={color=black},
y grid style={darkgray176},
ymin=-0.0120481927710844, ymax=0.253012048192772,
ytick style={color=black}
]
\addplot [dashed, red, fill=red, fill opacity=0.25]
table {%
304.846 0
304.927703125 0
305.00940625 0
305.091109375 0
305.1728125 0
305.254515625 0
305.33621875 0
305.417921875 0
305.499625 0
305.581328125 0
305.66303125 0
305.744734375 0
305.8264375 0
305.908140625 0
305.98984375 0
306.071546875 0
306.15325 0
306.234953125 0
306.31665625 0
306.398359375 0
306.4800625 0
306.561765625 0
306.64346875 0
306.725171875 0
306.806875 0
306.888578125 0
306.97028125 0
307.051984375 0
307.1336875 0
307.215390625 0
307.29709375 0
307.378796875 0
307.4605 0
307.542203125 0
307.62390625 0
307.705609375 0
307.7873125 0
307.869015625 0
307.95071875 0
308.032421875 0.240963855421688
308.114125 0.240963855421688
308.195828125 0.240963855421688
308.27753125 0.240963855421688
308.359234375 0.240963855421688
308.4409375 0.240963855421688
308.522640625 0.240963855421688
308.60434375 0.240963855421688
308.686046875 0.240963855421688
308.76775 0.240963855421688
308.849453125 0.240963855421688
308.93115625 0.240963855421688
309.012859375 0.240963855421688
309.0945625 0.240963855421688
309.176265625 0.240963855421688
309.25796875 0.240963855421688
309.339671875 0.240963855421688
309.421375 0.240963855421688
309.503078125 0.240963855421688
309.58478125 0.240963855421688
309.666484375 0.240963855421688
309.7481875 0.240963855421688
309.829890625 0.240963855421688
309.91159375 0.240963855421688
309.993296875 0.240963855421688
310.075 0.240963855421688
310.156703125 0.240963855421688
310.23840625 0.240963855421688
310.320109375 0.240963855421688
310.4018125 0.240963855421688
310.483515625 0.240963855421688
310.56521875 0.240963855421688
310.646921875 0.240963855421688
310.728625 0.240963855421688
310.810328125 0.240963855421688
310.89203125 0.240963855421688
310.973734375 0.240963855421688
311.0554375 0.240963855421688
311.137140625 0.240963855421688
311.21884375 0.240963855421688
311.300546875 0.240963855421688
311.38225 0.240963855421688
311.463953125 0.240963855421688
311.54565625 0.240963855421688
311.627359375 0.240963855421688
311.7090625 0.240963855421688
311.790765625 0.240963855421688
311.87246875 0.240963855421688
311.954171875 0.240963855421688
312.035875 0.240963855421688
312.117578125 0.240963855421688
312.19928125 0
312.280984375 0
312.3626875 0
312.444390625 0
312.52609375 0
312.607796875 0
312.6895 0
312.771203125 0
312.85290625 0
312.934609375 0
313.0163125 0
313.098015625 0
313.17971875 0
313.261421875 0
313.343125 0
313.424828125 0
313.50653125 0
313.588234375 0
313.6699375 0
313.751640625 0
313.83334375 0
313.915046875 0
313.99675 0
314.078453125 0
314.16015625 0
314.241859375 0
314.3235625 0
314.405265625 0
314.48696875 0
314.568671875 0
314.650375 0
314.732078125 0
314.81378125 0
314.895484375 0
314.9771875 0
315.058890625 0
315.14059375 0
315.222296875 0
315.304 0
};
\addlegendentry{PDF}
\addplot [dashed, color=black!60, line width=2pt]
table {%
310 -0.0120481927710844
310 0.253012048192772
};
\addlegendentry{Baseline}
\addplot [semithick, colorLi]
table {%
308 -0.0120481927710844
308 0.253012048192772
};
\addplot [semithick, colorLi, forget plot]
table {%
309 -0.0120481927710844
309 0.253012048192772
};
\addplot [semithick, colorLi, forget plot]
table {%
310 -0.0120481927710844
310 0.253012048192772
};
\addplot [semithick, colorLi, forget plot]
table {%
311 -0.0120481927710844
311 0.253012048192772
};
\addplot [semithick, colorLi, forget plot]
table {%
312 -0.0120481927710844
312 0.253012048192772
};
\addlegendentry{\cite{li2010}}
\addplot [semithick, colorOoi]
table {%
308.15 -0.0120481927710844
308.15 0.253012048192772
};
\addplot [semithick, colorOoi, forget plot]
table {%
310.15 -0.0120481927710844
310.15 0.253012048192772
};
\addplot [semithick, colorOoi, forget plot]
table {%
311.15 -0.0120481927710844
311.15 0.253012048192772
};
\addplot [semithick, colorOoi, forget plot]
table {%
312.15 -0.0120481927710844
312.15 0.253012048192772
};
\addlegendentry{\cite{NG2006268}}
\addplot [semithick, colorScott]
table {%
310.15 -0.0120481927710844
310.15 0.253012048192772
};
\addplot [semithick, colorScott, dashed, forget plot]
table {%
310.95 -0.0120481927710844
310.95 0.253012048192772
};
\addplot [semithick, colorScott, dashed, forget plot]
table {%
311.15 -0.0120481927710844
311.15 0.253012048192772
};
\addplot [semithick, colorScott, dashed, forget plot]
table {%
311.65 -0.0120481927710844
311.65 0.253012048192772
};
\addlegendentry{\cite{Scott_1988}}
\end{axis}

\end{tikzpicture}
    }
    \caption{Distributions of the parameters. The vertical lines represent the values chosen in literature for the DSA.}
    \label{fig:eye:distributions}
\end{figure}

\subsubsection{Uncertainty propagation}

We focus on the distribution of outputs of interest, from a random sample of the input parameters of get from the distributions presented $\pgfmathprintnumber{10000}$ points, which leads to a number of $\pgfmathprintnumber{10000}$ simulations.
The computational cost of the high fidelity simulations becomes in this case prohibitive and therefore we employ the reduced basis metamodel developed in \Cref{sec:rbm}.
\Cref{fig:uncertainty-propagation} presents the distribution of three outputs, namely the mean of the temperature over the cornea $T_\text{cornea}$,
and the temperature on points $O$ and $G$ respectively are the front and the back of the eyeball.
Note that $T_O$ and $T_\text{cornea}$ display a Gaussian distribution, whereas $T_G$ is more difficult to interpret, but could correspond to a uniform or bi-modal distribution.

We provide in \Cref{tab:results:uncertainty-propagation} results about mean values and standard deviation for the same quantities.
We note that the mean values of $T_O$ and $T_\text{cornea}$ are of the same order of magnitude as the experimental data in the validation section (\Cref{sec:validation}): the difference of temperature is about \qty{2}{\kelvin}, and standard deviations are in the same ranges.
The mean value of $T_G$ is very close to results reported in \Cref{fig:res:line} from the literature with a small standard deviation.

\begin{table}
    \centering
    \begin{tabular}{cccc}
        \toprule
        & $T_\text{cornea}$ & $T_O$ & $T_G$ \\
        \midrule
        Mean & 305.590082 & 303.185187 & 310.028526 \\
        Standard deviation & 1.788358 & 2.457063 & 1.055978 \\
        \bottomrule
    \end{tabular}
    \caption{Statistics of the outputs.}
    \label{tab:results:uncertainty-propagation}
\end{table}

\begin{figure}
    \centering
    \input{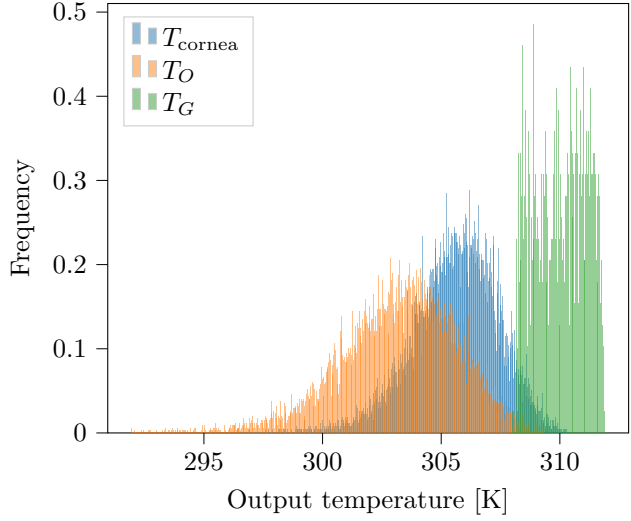}
    \caption{Distribution of the output, from the composed input distribution.}
    \label{fig:uncertainty-propagation}
\end{figure}

\subsubsection{Results of the SA}

We perform a sensitivity analysis to compute Sobol's indices, a convergence analysis varying the sampling size $N_\text{param}$.
\Cref{tab:results:SSA:convergence} reports the maximal deviation of these indices, and the time taken by the application to compute the 6 sets of Sobol's indices.

Additionally, we compute the \emph{predictivity factor} $Q_2$ for the polynomial chaos metamodel is defined as:
\begin{equation}
    Q_2:= 1 - \dfrac{\sum_{l=1}^{N}\big(Y_l - \widehat{f}(X_l)\big)^2}{\var(Y)},
\end{equation}
measuring how accurate the metamodel $\widehat{f}$ is at predicting the output $Y$ from the input $X$.
The closer $Q_2$ is to 1, the better the metamodel is.
In the context of the Sobol's indices experiment, the metamodel $\widehat{f}$ is the polynomial chaos expansion of the output $Y$.
The test of convergence is performed using the temperature on point $O$ as the output.
The convergence of Sobol's indices is reached for $N_\text{param} = 200$ with a $10^{-2}$ accuracy, which is a threshold used in the sequel.

\begin{table}
    \centering
    \begin{tabular}{cccc}
        \toprule
        $N_\text{param}$ & Max deviation & $t_\text{exec}$ & $Q_2$\\
        \midrule
        60  & 0.18102 & \qty{0.75609}{\second} & 0.999153\\
        100 & 0.03698 & \qty{1.99651}{\second} & 0.992648\\
        150 & 0.02969 & \qty{2.83743}{\second} & 0.99986 \\
        200 & 0.02923 & \qty{4.16046}{\second} & 0.998926\\
        400 & 0.00739 & \qty{8.36701}{\second} & 0.999931\\
        600 & 0.00496 & \qty{15.7947}{\second} & 0.9998  \\
        1000& 0.00248 & \qty{22.364 }{\second} & 0.999904\\
        \bottomrule
    \end{tabular}
    \caption{Convergence of the Sobol's indices.}
    \label{tab:results:SSA:convergence}
\end{table}

\Cref{fig:results:SSA:sobol} shows the results of the Sobol analysis for different outputs of interest.
Recall that \Cref{fig:outputs} shows where the points are in the eye.

In the deterministic sensitivity analysis conducted in \Cref{sec:DSA}, the impact of the variation of a sole parameter on the temperature at point $O$ was studied.
Using Sobol's indices, we are now able to measure the impact when all of them are varying.
The results of Sobol analysis at point $O$ presented in \Cref{fig:results:SSA:sobol:O} are in very good agreement with the deterministic findings:
the temperature at the level of the cornea is strongly influenced by external factors such as $h_\text{amb}$, as well subject-specific parameters such as $T_\text{amb}$, $E$, and $T_\text{bl}$,
Moreover, it is minimally influenced by the lens conductivity $k_\text{lens}$ and the blood convection coefficient $h_\text{bl}$.

Sobol's indices for several other locations are gathered in \Cref{fig:results:SSA:sobol}(b--f).
From these results, we can infer the following ranking of the influential parameters: $T_\text{amb}$, $h_\text{amb}$, $E$, and $T_\text{bl}$.
In particular, the dependence of the ambient temperature $T_\text{amb}$ decreases when we go deeper inside the eye.
Precisely, the impact of $T_\text{amb}$ is still significant for the mean temperature of the cornea, but the other parameters are equally influential.
These behaviors are coherent with physiological conditions.
Moreover, regardless of the output studied, the parameters $k_\text{lens}$ and $h_\text{bl}$ are minimally influencing the output.
Consequently in future simulations, their value can be set at baseline.
Surprisingly, the temperature at $B_1$, on the lens, is minimally influenced by $k_\text{lens}$, but this parameter has a minimal role in the modeling process.
On the other hand, $T_\text{bl}$ is very influential at $D_1$ and $G$, close to vascular beds,
again in a coherent manner with the physiological situation.
Finally, we can notice a slight difference between the first-order and total-order indices, mostly for $h_\text{amb}$ and $T_\text{amb}$,
implying that there are high-order interactions among these selected parameters.
To measure the impact of coupled parameters, second-order Sobol's indices computation is required, but the polynomial chaos expansion does not directly provide these values.
Alternatively, a Monte-Carlo based method could be implemented which is very costly to the computational viewpoint.

\pgfplotsset{
   textnumber/.style={
     /pgf/number format/.cd,
     fixed,
     use comma,
     fixed zerofill,
     precision=4,
     1000 sep={.},
     },
  }

\begin{figure}
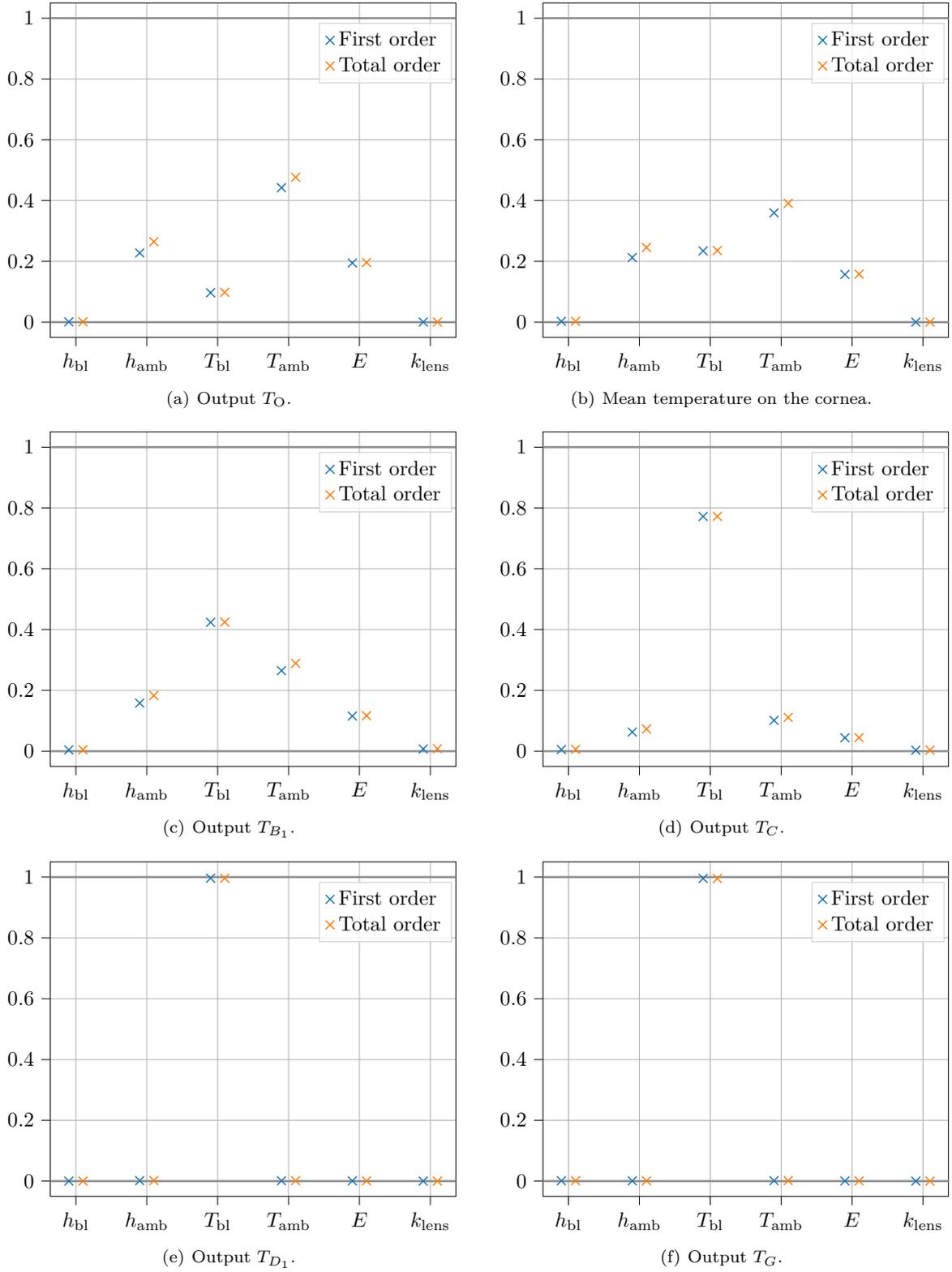

    \centering
    \def\mymarker{x}
    \def\chl{0.96}
    \subfigure[Output $T_\text{O}$.\label{fig:results:SSA:sobol:O}]{
        \def\sobolpath{fig/eye/results/SSA/sobol-O.tikz}
\begin{tikzpicture}[scale=\chl]

\begin{axis}[
legend cell align={left},
legend style={
    fill opacity=0.8,
    draw opacity=1,
    text opacity=1,
    at={(0.99,0.94)},
    anchor=north east,
    draw=lightgray204
},
tick align=outside,
tick pos=left,
x grid style={darkgray176},
xmajorgrids,
xmin=-0.26, xmax=5.46,
xtick style={color=black},
xtick={0.1,1.1,2.1,3.1,4.1,5.1},
xticklabels={$h_\text{bl}$, $h_\text{amb}$, $T_\text{bl}$, $T_\text{amb}$, $E$, $k_\text{lens}$},
y grid style={darkgray176},
ymajorgrids,
ymin=-0.05, ymax=1.05,
ytick style={color=black}
]

\addplot [darkgray176!80!black, forget plot, line width=1pt]
table {%
-0.26 0
5.46 0
};
\addplot [darkgray176!80!black, forget plot, line width=1pt]
table {%
-0.26 1
5.46 1
};

\input{\sobolpath}
\addlegendentry{Total order}
\end{axis}

\end{tikzpicture}
    }
    \subfigure[Mean temperature on the cornea.]{
        \def\sobolpath{fig/eye/results/SSA/sobol-cornea.tikz}
\begin{tikzpicture}[scale=\chl]

\begin{axis}[
legend cell align={left},
legend style={
    fill opacity=0.8,
    draw opacity=1,
    text opacity=1,
    at={(0.99,0.94)},
    anchor=north east,
    draw=lightgray204
},
tick align=outside,
tick pos=left,
x grid style={darkgray176},
xmajorgrids,
xmin=-0.26, xmax=5.46,
xtick style={color=black},
xtick={0.1,1.1,2.1,3.1,4.1,5.1},
xticklabels={$h_\text{bl}$, $h_\text{amb}$, $T_\text{bl}$, $T_\text{amb}$, $E$, $k_\text{lens}$},
y grid style={darkgray176},
ymajorgrids,
ymin=-0.05, ymax=1.05,
ytick style={color=black}
]

\addplot [darkgray176!80!black, forget plot, line width=1pt]
table {%
-0.26 0
5.46 0
};
\addplot [darkgray176!80!black, forget plot, line width=1pt]
table {%
-0.26 1
5.46 1
};

\input{\sobolpath}
\addlegendentry{Total order}
\end{axis}

\end{tikzpicture}
    }
    \subfigure[Output $T_{B_1}$.]{
        \def\sobolpath{fig/eye/results/SSA/sobol-B1.tikz}
\begin{tikzpicture}[scale=\chl]

\begin{axis}[
legend cell align={left},
legend style={
    fill opacity=0.8,
    draw opacity=1,
    text opacity=1,
    at={(0.99,0.94)},
    anchor=north east,
    draw=lightgray204
},
tick align=outside,
tick pos=left,
x grid style={darkgray176},
xmajorgrids,
xmin=-0.26, xmax=5.46,
xtick style={color=black},
xtick={0.1,1.1,2.1,3.1,4.1,5.1},
xticklabels={$h_\text{bl}$, $h_\text{amb}$, $T_\text{bl}$, $T_\text{amb}$, $E$, $k_\text{lens}$},
y grid style={darkgray176},
ymajorgrids,
ymin=-0.05, ymax=1.05,
ytick style={color=black}
]

\addplot [darkgray176!80!black, forget plot, line width=1pt]
table {%
-0.26 0
5.46 0
};
\addplot [darkgray176!80!black, forget plot, line width=1pt]
table {%
-0.26 1
5.46 1
};

\input{\sobolpath}
\addlegendentry{Total order}
\end{axis}

\end{tikzpicture}
    }
    \subfigure[Output $T_C$.]{
        \def\sobolpath{fig/eye/results/SSA/sobol-C.tikz}
\begin{tikzpicture}[scale=\chl]

\begin{axis}[
legend cell align={left},
legend style={
    fill opacity=0.8,
    draw opacity=1,
    text opacity=1,
    at={(0.99,0.94)},
    anchor=north east,
    draw=lightgray204
},
tick align=outside,
tick pos=left,
x grid style={darkgray176},
xmajorgrids,
xmin=-0.26, xmax=5.46,
xtick style={color=black},
xtick={0.1,1.1,2.1,3.1,4.1,5.1},
xticklabels={$h_\text{bl}$, $h_\text{amb}$, $T_\text{bl}$, $T_\text{amb}$, $E$, $k_\text{lens}$},
y grid style={darkgray176},
ymajorgrids,
ymin=-0.05, ymax=1.05,
ytick style={color=black}
]

\addplot [darkgray176!80!black, forget plot, line width=1pt]
table {%
-0.26 0
5.46 0
};
\addplot [darkgray176!80!black, forget plot, line width=1pt]
table {%
-0.26 1
5.46 1
};

\input{\sobolpath}
\addlegendentry{Total order}
\end{axis}

\end{tikzpicture}
    }
    \subfigure[Output $T_{D_1}$.]{
        \def\sobolpath{fig/eye/results/SSA/sobol-D1.tikz}
\begin{tikzpicture}[scale=\chl]

\begin{axis}[
legend cell align={left},
legend style={
    fill opacity=0.8,
    draw opacity=1,
    text opacity=1,
    at={(0.99,0.94)},
    anchor=north east,
    draw=lightgray204
},
tick align=outside,
tick pos=left,
x grid style={darkgray176},
xmajorgrids,
xmin=-0.26, xmax=5.46,
xtick style={color=black},
xtick={0.1,1.1,2.1,3.1,4.1,5.1},
xticklabels={$h_\text{bl}$, $h_\text{amb}$, $T_\text{bl}$, $T_\text{amb}$, $E$, $k_\text{lens}$},
y grid style={darkgray176},
ymajorgrids,
ymin=-0.05, ymax=1.05,
ytick style={color=black}
]

\addplot [darkgray176!80!black, forget plot, line width=1pt]
table {%
-0.26 0
5.46 0
};
\addplot [darkgray176!80!black, forget plot, line width=1pt]
table {%
-0.26 1
5.46 1
};

\input{\sobolpath}
\addlegendentry{Total order}
\end{axis}

\end{tikzpicture}
    }
    \subfigure[Output $T_G$.]{
        \def\sobolpath{fig/eye/results/SSA/sobol-G.tikz}
\begin{tikzpicture}[scale=\chl]

\begin{axis}[
legend cell align={left},
legend style={
    fill opacity=0.8,
    draw opacity=1,
    text opacity=1,
    at={(0.99,0.94)},
    anchor=north east,
    draw=lightgray204
},
tick align=outside,
tick pos=left,
x grid style={darkgray176},
xmajorgrids,
xmin=-0.26, xmax=5.46,
xtick style={color=black},
xtick={0.1,1.1,2.1,3.1,4.1,5.1},
xticklabels={$h_\text{bl}$, $h_\text{amb}$, $T_\text{bl}$, $T_\text{amb}$, $E$, $k_\text{lens}$},
y grid style={darkgray176},
ymajorgrids,
ymin=-0.05, ymax=1.05,
ytick style={color=black}
]

\addplot [darkgray176!80!black, forget plot, line width=1pt]
table {%
-0.26 0
5.46 0
};
\addplot [darkgray176!80!black, forget plot, line width=1pt]
table {%
-0.26 1
5.46 1
};

\input{\sobolpath}
\addlegendentry{Total order}
\end{axis}

\end{tikzpicture}
    }
    \caption{Sobol's indices for the SSA.}
    \label{fig:results:SSA:sobol}
\end{figure}

\section{Conclusion}
\label{sec:conclusion}

We have successfully developed a numerical model that accurately simulates heat transfer within the complex three-dimensional structure of the human eyeball,
enabling us to calculate the temperature distribution across various ocular tissues.
This model has undergone rigorous validation against both experimental data and numerical results from existing literature.
A key advancement in our study is the implementation of a certified reduced basis method.
This method significantly accelerates the simulations of our complex model while maintaining high accuracy,
making it highly efficient for many-queries computations essential in uncertainty quantification studies.
Our sensitivity analysis pinpointed four main physiological parameters as most influential in affecting the results:
blood temperature, ambient temperature, the ambient air convection coefficient, and the evaporation rate.
These findings build upon and enrich prior studies, such as those highlighted in \cite{Scott_1988,NG2006268,li2010},
underscoring the vital role of blood flow characteristics and environmental conditions, particularly in the inner ocular tissues.
Additionally, through Sobol' indices analysis, we identified the significant impact of parameter interactions, particularly those related to ambient temperature.
From a clinical standpoint, our insights into heat transport in the human eye could inform studies on the effects of electromagnetic wave radiation,
as explored in \cite{Hirata2007,NG2007829,doi:10.1142/S0219519409002936} and related references.
As a next step, we plan to couple the heat transfer model with models describing the aqueous humor flow, as in \cite{OOI2008252,10.1007/978-3-030-63591-6_45}.
This work is crucial to understand the formation of Krukenberg's spindle, to better understand the pathophysiology of pigment dispersion syndrome and to enhance drug delivery efficiency in the anterior chamber~\cite{Wang2016,BHANDARI2020286}. Our preliminary findings in this direction can be found in~\cite{saigre:hal-04558924}.
Ultimately, our work, in conjunction with previous initiatives such as the Ocular Mathematical Virtual Simulator \cite{https://doi.org/10.1002/cnm.3791},
lays the groundwork for a comprehensive, multi-physics, multiscale framework in ophthalmology, tailored for personalized medical applications.

\appendix

\section{Reproductibility of results}
\label{app:reproduce}

All the codes used to run the simulations are available on the \fpp{}~\cite{christophe_prud_homme_2023_8272196} GitHub repository\footnote{\url{https://github.com/feelpp/feelpp}}.
Details are given in the \texttt{README.md} file of the repository: \url{https://github.com/feelpp/feelpp/tree/develop/mor/examples/eye2brain}

\section*{Acknowledgements}
\addcontentsline{toc}{section}{Acknowledgements}

The authors would like to acknowledge the support of the platform Cemosis at University of Strasbourg and the French Ministry of Higher Education, Research and Innovation.
This work has benefited from a national grant managed by the French National Research Agency (Agence Nationale de la Recherche) attributed to the Exa-MA project of the NumPEx PEPR program, under the reference ANR-22-EXNU-0002.

\printbibliography{}
\addcontentsline{toc}{section}{References}

\end{document}